\DeclareSymbolFont{cyrletters}{OT2}{wncyr}{m}{n}
\DeclareMathSymbol{\Sha}{\mathalpha}{cyrletters}{"58}
\newtheorem{prop}{Proposition}[section]
\newtheorem{lem}[prop]{Lemma}
\newtheorem{thm}[prop]{Theorem}
\newtheorem{conj}[prop]{Conjecture}
\theoremstyle{definition}
\newtheorem{rem}[prop]{Remark}
\newtheorem{defin}[prop]{Definition}
\newtheorem{nota}[prop]{Notation}
\newtheorem{ques}[prop]{Question}
\newcommand{\Ima} {\mathop{\mathrm{Im}}}
\newcommand{\Card} {\mathop{\mathrm{Card}}}
\newcommand{\degr} {\mathop{\mathrm{deg}}}
\newcommand{\Spec} {\mathop{\mathrm{Spec}}}
\newcommand{\rank} {\mathop{\mathrm{rank}}}
\newcommand{\hFplus} {\mathop{h_{\mathrm{F}^+}}}
\newcommand{\Reg} {\mathop{\mathrm{Reg}}}
\begin{document}

\title{Heights and regulators of number fields and elliptic curves}
\author{{F}abien {P}azuki}
\address{Department of Mathematical Sciences, University of Copenhagen, Universitetsparken 5, 2100 Copenhagen, Denmark, and IMB, Universit\'e de Bordeaux, 351 cours de la Lib\'eration, 33405 Talence.}
\email{fabien.pazuki@math.u-bordeaux.fr}
\thanks{The author wishes to thank the Laboratoire Math\'ematiques de l'Universit\'e de Franche-Comt\'e, in Besan\c{c}on and specially Aur\'elien Galateau for the invitation to give a course in October 2013 on the Lang and Silverman conjecture. One goal of the present text is to explain in details a new possible consequence of this conjecture. Many thanks to Ga\"el R\'emond, Qing Liu, Pascal Autissier, Marc Hindry, Eduardo Friedman and the anonymous referee for their comments. The author is supported by ANR-10-BLAN-0115 Hamot and ANR-10-JCJC-0107 Arivaf.}
\maketitle

\vspace{0.5cm}

\noindent \textbf{Abstract.}
We compare general inequalities between invariants of number fields and invariants of elliptic curves over number fields. On the number field side, we remark that there is only a finite number of non-CM number fields with bounded regulator. On the elliptic curve side, assuming the height conjecture of Lang and Silverman, we obtain a Northcott property for the regulator on the set of elliptic curves with dense rational points over a number field. This amounts to say that the arithmetic of CM fields is similar, with respect to the invariants considered here, to the arithmetic of elliptic curves over a number field having a non Zariski dense Mordell-Weil group, \textit{i.e.} with rank zero.

{\flushleft
\textbf{Keywords:} Heights, abelian varieties, regulators, Mordell-Weil.\\
\textbf{Mathematics Subject Classification:} 11G50, 14G40. }

\begin{center}
---------
\end{center}

\begin{center}
\textbf{Hauteurs et r\'egulateurs de corps de nombres et de courbes elliptiques}.
\end{center}

\noindent \textbf{R\'esum\'e.}
On compare des in\'egalit\'es entre invariants classiques de corps de nombres et de courbes elliptiques d\'efinies sur un corps de nombres. Dans le cas des corps de nombres, on remarque qu'il n'y a qu'un nombre fini de corps de nombres non-CM avec r\'egulateur born\'e. On obtient alors comme cons\'equence d'une conjecture de Lang et Silverman une propri\'et\'e de Northcott pour le r\'egulateur sur l'ensemble des courbes elliptiques sur un corps de nombres dont les points rationnels sont denses. Cela indique que l'arithm\'etique d'un corps CM est similaire, au sens des invariants consid\'er\'es ici, \`a celle des courbes elliptiques sur un corps de nombres dont le groupe de Mordell-Weil n'est pas dense au sens de Zariski, donc de rang nul.

{\flushleft
\textbf{Mots-Clefs:} Hauteurs, vari\'et\'es ab\'eliennes, r\'egulateurs, Mordell-Weil.\\
}

\begin{center}
---------
\end{center}

\thispagestyle{empty}

\section{Introduction}
Comparisons between number fields and abelian varieties defined over number fields are used to help in the understanding of the arithmetic of both. As a recent example of such a comparison one can read \cite{Hin}, where a conjectural asymptotic in the spirit of the Brauer-Siegel theorem for number fields is given for abelian varieties. In this work we will focus on inequalities in both settings, but restricting ourselves to abelian varieties of dimension $1$, \textit{i.e.} elliptic curves. Firstly, we gather some useful inequalities between classical invariants --- discriminant, degree, regulator --- of number fields. Secondly we want to understand how the inequality of Theorem \ref{reg disc} between the regulator and the discriminant of a number field finds its elliptic counterpart in Theorem \ref{reg height thm}, assuming the Lang-Silverman height conjecture. Let us stress here that Hindry and Silverman showed in \cite{HiSi3} that the Lang-Silverman conjecture for elliptic curves is implied by the ABC conjecture, see Remark \ref{ABC} in the sequel.

For some inspiring references about regulators of number fields, one can consult various texts, including Friedman \cite{Fri}, Friedman and Skoruppa \cite{FriSko}, Berg\'e and Martinet \cite{BeMa}, Silverman \cite{Sil4}, Zimmert \cite{Zim}, Remak \cite{Rema}, Odlyzko \cite{Odl1, Odl2}.

Let us now state some consequences of the studied inequalities. On the number field side, one has the following. First recall that a CM field is a totally imaginary degree 2 extension of a totally real extension of $\mathbb{Q}$.

\begin{thm}\label{reg fini intro}
There exists only a finite number of non-CM number fields with bounded regulator.
\end{thm}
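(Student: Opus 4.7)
The plan is to combine Theorem~\ref{reg disc}, which relates $\Reg(K)$ to $|\disc(K)|$ for non-CM $K$, with an absolute exponential lower bound on regulators of Zimmert--Friedman type, and to conclude by Hermite--Minkowski. First I would observe that the Zimmert/Friedman lower bound supplies absolute constants $c_{1},c_{2}>0$ such that $\Reg(K)\geq c_{1}\exp\bigl(c_{2}(r_{1}+r_{2})\bigr)$ for every number field $K$, where $r_{1},r_{2}$ are the usual numbers of archimedean places. Since $[K:\mathbb{Q}]=r_{1}+2r_{2}\leq 2(r_{1}+r_{2})$, a uniform upper bound $\Reg(K)\leq B$ yields a uniform upper bound on the degree $n=[K:\mathbb{Q}]$. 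This preliminary reduction requires no CM hypothesis at all.

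Next I would use Theorem~\ref{reg disc} to control the discriminant. For a non-CM field $K$ this theorem produces a lower bound of the form $\Reg(K)\geq \Phi(n,|\disc(K)|)$, with $\Phi$ strictly increasing and unbounded in its second argument for each fixed $n$. Combined with the degree bound from the first step, the assumed bound on $\Reg(K)$ translates into a uniform bound on $|\disc(K)|$. The conclusion is then immediate from the Hermite--Minkowski finiteness theorem: there are only finitely many number fields of bounded degree and bounded discriminant.

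The main subtlety --- which really lives inside Theorem~\ref{reg disc} rather than in the present corollary --- is the CM obstruction. For a CM extension $K/K^{+}$ with $K^{+}$ the maximal totally real subfield, the inclusion $\mathcal{O}_{K^{+}}^{\times}\hookrightarrow \mathcal{O}_{K}^{\times}$ has finite index, so $\Reg(K)$ and $\Reg(K^{+})$ differ only by an explicit factor depending on the roots of unity of $K$ and on $n$, while $|\disc(K)|$ can be made arbitrarily large by varying the ramification in $K/K^{+}$. Hence no monotone inequality of the shape $\Reg(K)\geq \Phi(n,|\disc(K)|)$ can hold in the CM case, which is exactly why the non-CM hypothesis is unavoidable and why the finiteness statement fails for CM fields, as is already visible in the family of cyclotomic fields $\mathbb{Q}(\zeta_{p})$.
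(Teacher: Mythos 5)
Your argument is correct and takes essentially the same route as the paper: a Zimmert/Friedman-type exponential lower bound on the regulator bounds the degree, then Theorem~\ref{reg disc} together with the fact that $r_K>r_0$ for non-CM fields (Proposition~\ref{CM}) bounds the discriminant, and Hermite--Minkowski gives finiteness. One peripheral slip in your closing remark: the cyclotomic fields $\mathbb{Q}(\zeta_p)$ do \emph{not} have bounded regulator (their regulators grow at least exponentially with the degree), so the correct witness that finiteness fails for CM fields is the family of imaginary quadratic fields, with regulator bounded and discriminant unbounded, as in Remark~\ref{disc CM}.
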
 

The proof is relatively short and essentially relies on existing inequalities. It will be given in section \ref{nb}. On the elliptic curve side, one obtains the following result as a corollary of Theorem \ref{reg height thm}.

\begin{thm}\label{reg height thm intro}
Assume the Lang-Silverman height conjecture \ref{LangSilV1}.
The set of $\overline{\mathbb{Q}}$-isomorphism classes of elliptic curves $E$, defined over a fixed number field $K$ with $E(K)$ Zariski dense in $E$ and bounded regulator is finite.
\end{thm}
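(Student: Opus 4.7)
The plan is to deduce the corollary in two steps: first, use Theorem \ref{reg height thm} (which itself assumes the Lang-Silverman height conjecture) to turn the hypothesis of bounded regulator into a uniform upper bound on a suitable height of $E$; then, apply Northcott's classical finiteness theorem on number fields of fixed degree.

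First, I would observe that for an elliptic curve, the Zariski density of $E(K)$ in $E$ is equivalent to $\rank E(K) \geq 1$. In particular, there exists a non-torsion point $P \in E(K)$, so Conjecture \ref{LangSilV1} is applicable and yields the pointwise lower bound $\hat{h}_E(P) \geq c(K) \max(h(E),1)$, where $h(E)$ denotes a suitable height of the curve (for instance the Faltings height, or, up to well-known linear comparisons, the Weil height of the $j$-invariant). This is the hypothesis under which Theorem \ref{reg height thm} furnishes a lower bound of the form
\[
\Reg(E/K) \; \geq \; c(K)\, \varphi(h(E)),
\]
where $\varphi$ is a positive, increasing, unbounded function. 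Assuming $\Reg(E/K) \leq R$, this bound translates into $h(E) \leq H_0(K,R)$, uniformly over the family in question.

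Second, since $E$ is defined over the fixed number field $K$, its $j$-invariant lies in $K$, a field of fixed degree over $\mathbb{Q}$. Northcott's property applied to the Weil height on $K$ (or equivalently to the Faltings height on elliptic curves over $K$) ensures that only finitely many values of $j(E)$ are compatible with $h(E) \leq H_0$. Since the $\overline{\mathbb{Q}}$-isomorphism class of $E$ is determined by $j(E)$, the finiteness claim follows.

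The substantive work is entirely contained in Theorem \ref{reg height thm}, and that is where I expect the main obstacle to lie: one must convert the Lang-Silverman pointwise lower bound on $\hat{h}_E$ into a lower bound on the determinant $\Reg(E/K) = \det(\langle P_i,P_j\rangle)$ taken over a basis of $E(K)/E(K)_{\mathrm{tors}}$. The natural route is Minkowski's theorem on successive minima applied to the lattice $E(K)/E(K)_{\mathrm{tors}}$ equipped with the N\'eron-Tate quadratic form: if $\lambda_1$ is its first successive minimum, then $\lambda_1^2 \geq c(K)\max(h(E),1)$ and $\Reg(E/K) \geq \lambda_1^{2r}/\gamma_r^r$, where $r = \rank E(K)$ and $\gamma_r$ is the Hermite constant. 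The delicate point is to combine these estimates so that the dependence on the a priori unbounded rank $r$ is absorbed and a genuinely $E$-uniform upper bound on $h(E)$ emerges, as required for Northcott to conclude.
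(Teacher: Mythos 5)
Your proposal is correct and follows essentially the paper's own route: Theorem \ref{reg height thm} (under Conjecture \ref{LangSilV1}) turns bounded regulator together with $m_K\geq 1$ into a bound on $\hFplus(E/K)$, and the Northcott property of the Faltings height (equivalently of the height of $j_E\in K$, with $K$ fixed) then gives finiteness of $\overline{\mathbb{Q}}$-isomorphism classes, the rank-dependence being absorbed exactly as you anticipate, via the paper's Lemma \ref{rank height} inside the proof of Theorem \ref{reg height thm}. The only loose phrasing is that the lower bound is $\bigl(c_{10}\max\{\hFplus(E/K),1\}\bigr)^{m_K/2}$ rather than $c(K)\varphi(h(E))$ for a fixed unbounded $\varphi$, but since $m_K\geq 1$ this still forces the height to be bounded, as in the paper.
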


The Zariski density of $E(K)$ is equivalent to $E$ having positive Mordell-Weil rank over $K$ because $E$ is simple.

Focusing on elliptic curves defined over number fields, one also obtains (without assuming any height conjecture) an inequality between the Faltings height of Definitions \ref{faltings} and \ref{chai cond}, the primes of bad reduction and the injectivity diameter, which is of independent interest for two reasons: there is no assumption of semi-stability and all constants are explicit.

\begin{thm}\label{height conductor injectivity} 
Let $K$ be a number field of degree $d$. Then for any elliptic curve $E$ defined over $K$, one has $$\hFplus(E/K) \geq \frac{1}{3\cdot12^8 d}\log N^{0}_{E/K}+\frac{1}{3d}\sum_{v\in{M_K^{\infty}}}d_v\rho(E_v,L_v)^{-2}-\frac{1}{3}\log\frac{\pi}{\pi-3}$$ where $N^0_{E/K}$ is the product of the norms of the primes of $K$ of bad reduction for $E$ and $\rho(E_v,L_v)$ is the injectivity diameter of $E_v(\mathbb{C})$ polarized by $L_v$, the set of archimedean places of $K$ is denoted by $M_K^{\infty}$ and $d_v=[K_v:\mathbb{Q}_v]$ is the local degree.
\end{thm}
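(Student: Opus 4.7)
The plan is to reduce to the semi-stable case by a controlled base change, apply the explicit formula for the Faltings height of semi-stable elliptic curves, and then lower-bound the two pieces by an explicit discriminant--conductor estimate on the non-archimedean side and an Autissier-type theta-function estimate on the archimedean side.

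First, I would choose a finite extension $K'/K$ over which $E$ acquires everywhere semi-stable reduction, using an effective bound on $[K':K]$ (for instance by passing to $K(E[\ell])$ for a small auxiliary prime $\ell$, giving $[K':K]\leq |\mathrm{GL}_{2}(\mathbb{F}_{\ell})|$). By base-change invariance of the stable Faltings height, $\hFplus(E/K)=h_{F}(E/K')$. The constant $12^{8}$ is designed to absorb both $[K':K]$ and the discriminant--conductor comparison needed to transport the resulting inequality from $K'$ back to $K$. For the semi-stable curve $E/K'$ I would then invoke the classical formula
\begin{equation*}
12\,[K':\mathbb{Q}]\,h_{F}(E/K')=\log N_{K'/\mathbb{Q}}\bigl(\mathfrak{D}_{E/K'}^{\min}\bigr)-\sum_{w\in M_{K'}^{\infty}}d_{w}\log\bigl((2\pi)^{12}(\mathrm{Im}\,\tau_{w})^{6}|\Delta(\tau_{w})|\bigr),
\end{equation*}
where $\mathfrak{D}_{E/K'}^{\min}$ is the minimal discriminant ideal of the Néron model and $\Delta(\tau)$ is the modular discriminant.

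On the non-archimedean side, $\mathrm{ord}_{w}(\mathfrak{D}_{E/K'}^{\min})\geq 1$ at each place where $E/K'$ has (necessarily multiplicative) bad reduction, and a careful comparison with the bad primes of $E/K$ (including primes of potentially good bad reduction, handled by a quantitative twist-discriminant bound) produces $\log N_{K'/\mathbb{Q}}(\mathfrak{D}_{E/K'}^{\min})\geq c_{1}[K':\mathbb{Q}](12^{8}d)^{-1}\log N_{E/K}^{0}$. For the archimedean contribution, at each place $w\mid v\mid\infty$ I would produce a pointwise lower bound of the shape
\begin{equation*}
-\log\bigl((2\pi)^{12}(\mathrm{Im}\,\tau_{w})^{6}|\Delta(\tau_{w})|\bigr)\geq c_{2}\,\rho(E_{v},L_{v})^{-2}-c_{3}\log\tfrac{\pi}{\pi-3}
\end{equation*}
by an Autissier-style analysis: restrict $\tau_{w}$ to the standard fundamental domain of $\mathrm{SL}_{2}(\mathbb{Z})$ so that $\rho^{-2}$ is comparable to $\mathrm{Im}\,\tau_{w}$, then use the $q$-expansion $|\Delta(\tau)|\asymp e^{-2\pi\mathrm{Im}\,\tau}$ together with a Jensen-type argument on the theta function; the precise constant $\log(\pi/(\pi-3))$ emerges from an elementary sine-integral estimate used to control the error in the theta-function lower bound.

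Assembling the two estimates and dividing by $3$ (which redistributes the various constants accumulated across the steps) yields the claimed inequality. I expect the main obstacle to be the archimedean estimate: the injectivity diameter $\rho(E_{v},L_{v})$ only sees the shortest period of $E_{v}(\mathbb{C})$, whereas $|\Delta(\tau_{w})|$ depends on the full lattice, so producing the bound with the precise constants that give the clean $\log(\pi/(\pi-3))$ correction requires careful analysis on the fundamental domain, while the non-archimedean step is essentially a bookkeeping exercise tracking twist-discriminant exponents through the base change.
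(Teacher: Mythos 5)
Your overall architecture (base change to a semi-stable model, Silverman's explicit formula, separate non-archimedean and archimedean lower bounds) is reasonable, but the non-archimedean step as you describe it fails. Your plan is to replace $\hFplus(E/K)$ by the stable height computed over $K'$ and then bound $\log N_{K'/\mathbb{Q}}(\Delta^{\min}_{E/K'})$ from below by a multiple of $\log N^{0}_{E/K}$. This cannot work: a prime $\mathfrak{p}$ of $K$ at which $E$ has unstable bad reduction with potentially good reduction becomes a prime of \emph{good} reduction over $K'$, so it contributes nothing at all to the minimal discriminant of $E/K'$. In the extreme case where every bad prime of $E/K$ is of this type, $N_{K'/\mathbb{Q}}(\Delta^{\min}_{E/K'})=1$ while $N^{0}_{E/K}$ is unbounded, so the claimed inequality $\log N_{K'/\mathbb{Q}}(\Delta^{\min}_{E/K'})\geq c_{1}[K':\mathbb{Q}](12^{8}d)^{-1}\log N^{0}_{E/K}$ is simply false, and no "twist-discriminant" bookkeeping over $K'$ can recover information that is no longer present in the stable model. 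Moreover the passage "$\hFplus(E/K)=h_F(E/K')$" discards exactly the quantity that saves the argument: the paper keeps the \emph{difference} of heights, $\hFplus(E/K)-\hFplus(E/K')=\frac{1}{[K':K]}\sum_{\mathfrak{p}}c(E,\mathfrak{p})\log\mathcal{N}(\mathfrak{p})$ over the unstable primes (Chai's base change conductor, formula (\ref{base change conductor})), together with the lower bound $c(E,\mathfrak{p})\geq 1/[K':K]$ and the choice $K'=K(E[12])$ with $[K':K]\leq 12^{4}$; this is precisely how the unstable part of $N^{0}_{E/K}$ enters, and it is where the exponent $8$ in $12^{8}$ comes from (Proposition \ref{height conductor}). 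To repair your argument you must either invoke that proposition or reconstruct an equivalent control of the unstable local contributions; the semi-stable part alone only yields the bound for $N^{st}_{E/K}$.

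On the archimedean side your idea of extracting the $\rho(E_v,L_v)^{-2}=\mathrm{Im}\,\tau_v$ term directly from the $-\log\bigl(\vert\Delta(\tau_v)\vert(2\,\mathrm{Im}\,\tau_v)^{6}\bigr)$ term in Silverman's formula is viable in dimension one (the $q$-expansion gives a linear-in-$\mathrm{Im}\,\tau_v$ lower bound that comfortably beats the required coefficients), and this would be a genuinely different, more elementary route than the paper's. But your account of where the constant $\log\frac{\pi}{\pi-3}$ comes from (a "sine-integral estimate", a "Jensen-type argument") is not a derivation; in the paper this constant has nothing to do with theta-function analysis: it arises from applying Autissier's effective matrix lemma, $\hFplus(E/K)+\frac{1}{2}\log\frac{1}{\varepsilon}\geq\frac{(1-\varepsilon)\pi}{6d}\sum_{v}d_v\rho(E_v,L_v)^{-2}$, with the specific choice $\varepsilon=1-3/\pi$, after which the theorem follows by simply adding this inequality (doubled) to the conductor bound and dividing by $3$. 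So the factor $1/3$ is not a redistribution of accumulated errors but the count of copies of $\hFplus(E/K)$ used on the left-hand side.
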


In section 2 we give the definitions of the regulator of a number field, of the regulator of an elliptic curve and of the Faltings height, also called the differential height, of an elliptic curve. In section 3 we gather the inequalities concerning number fields. In section 4 we prove the inequalities concerning elliptic curves and we compare these inequalities with the number field case. We conclude in section 5 with a comparative table.

Another way of reading the inequalities in this text lies in the following remark: in the number field case, both the discriminant and the regulator are volumes that carry arithmetic information, but the regulator is a weak invariant on CM-fields. For elliptic curves over a number field, the regulator and the differential height are both arithmetic volumes, but the regulator is a weak invariant on elliptic curves with non Zariski dense Mordell-Weil group (\textit{i.e.} on rank zero elliptic curves). The inequalities of Theorem \ref{reg disc} and Theorem \ref{reg height thm} compare these volumes precisely enough to reveal the special cases. An interesting question would be to obtain similar results in the case of general abelian varieties. This will be the object of a future work.

\section{Definitions}

If $K$ is a number field, we denote by $d$ its degree over $\mathbb{Q}$ and by $M_{K}$ the set of all places of $K$. Let $M_{K}^{\infty}$ be the set of archimedean places. For any $v\in{M_K}$, we have $K_{v}$ the completion of $K$ with respect to the valuation $|.|_{v}$. One normalizes $|p|_{v}=p^{-1}$ for any finite place $v$ above a prime number $p$. Then the local degree will be $d_v=[K_v:\mathbb{Q}_v]$. We will write $\mathfrak{p}$ for a prime ideal. We will denote by $D_K$ the absolute discriminant of $K$.

Let $S$ be a set. We will say that a function $f: S\to \mathbb{R}$ verifies a Northcott property on $S$ if for any real number $B$, the set $\{P\in{S}\,\vert\, f(P)\leq B\}$ is finite.

\subsection{Regulators of number fields and elliptic curves}

\subsubsection{Number fields}
Let $K$ be a number field of degree $d=r_1+2r_2$, with $r_1$ real embeddings $\sigma_1, ..., \sigma_{r_1}$ and $r_2$ pairs of complex conjugate embeddings $(\sigma_{r_1+1}, \overline{\sigma_{r_1+1}}), ..., (\sigma_{r_1+r_2}, \overline{\sigma_{r_1+r_2}})$. We denote by $U_K$ the group of units in $K$. By Dirichlet's theorem, $U_K$ is a group of rank $r=r_K=r_1+r_2-1$. Let $\varepsilon_1, ..., \varepsilon_r$ be a fundamental base, \textit{i.e.} any unit $\varepsilon\in{U_K}$ can be uniquely written as $\varepsilon=z \varepsilon_1^{n_1}\cdots\varepsilon_r^{n_r}$ with $z$ a root of unity in $K$ and $n_1, ..., n_r\in{\mathbb{Z}}$. The norm of any $\alpha\in{K}$ can be written as
$$N_{K/\mathbb{Q}}(\alpha)=\sigma_1(\alpha)\ldots\sigma_d(\alpha)=\sigma_1(\alpha)\ldots\sigma_{r_1}(\alpha)|\sigma_{r_1+1}(\alpha)|^2\ldots|\sigma_{r_1+r_2}(\alpha)|^2.$$
Let $\lambda$ be the logarithmic map given by
$$\begin{matrix}\lambda:&K&\rightarrow&\mathbb{R}^{r_1+r_2}\\&\alpha&\mapsto&\big(\log|\sigma_1(\alpha)|,\cdots,\log|\sigma_{r_1}(\alpha)|,2\log|\sigma_{r_1+1}(\alpha)|,\ldots,2\log|\sigma_{r_1+r_2}(\alpha)|\big),\end{matrix}$$
Moreover let $d_1=...=d_{r_1}=1$ for the real embeddings and $d_{r_1+1}=...=d_{r_1+r_2}=2$ for the complex embeddings -- so $d_i=d_{v_i}=[\overline{K_{v_i}}:\mathbb{Q}]$ for each $i\in\{1, ..., r_1+r_2\}$ and the first $r_1$ embeddings are the real ones.
Let $H=\{(x_1, ..., x_{r_1+r_2})\in{\mathbb{R}^{r_1+r_2}\,\vert\, x_1+...+x_{r_1+r_2}=0}\}$. We remark that $\lambda(U_K)\subset H$ because the norm of any unit is $\pm 1$, hence the logarithm of the absolute norm is $0$.
We are interested in the volume of the lattice spanned by the image of the units of $K$ in $H$. The vector $e=(1,...,1)$ is orthogonal to the hyperplane $H$ with respect to the canonical euclidean structure of $\mathbb{R}^{r_1+r_2}$, hence one has the following volume equality $$\mathrm{Vol}(\mathbb{R}^{r_1+r_2}/\lambda(U_K)\oplus \mathbb{Z}e)=\mathrm{Vol}(H/\lambda(U_K))\sqrt{r_1+r_2}.$$The volume of a fundamental domain of the lattice $\lambda(U_K)$ in the hyperplane $H$ is given by $\mathrm{Vol}(H/\lambda(U_K))$, where the volume is induced by the euclidean structure on $\mathbb{R}^{r_1+r_2}$. This motivates the following definition.

\begin{defin}\label{reg field}
The regulator $R_K$ of $K$ is defined in the following way $$R_K=\Big\vert \mathrm{det}\left[\begin{array}{cccc}
d_1\log\vert\sigma_1(\varepsilon_1)\vert & d_2\log\vert\sigma_2(\varepsilon_1)\vert & ... & d_{r_1+r_2}\log\vert\sigma_{r_1+r_2}(\varepsilon_1)\vert\\
d_1\log\vert\sigma_1(\varepsilon_2)\vert & d_2\log\vert\sigma_2(\varepsilon_2)\vert & ... & d_{r_1+r_2}\log\vert\sigma_{r_1+r_2}(\varepsilon_2)\vert\\
... & ... & ...  & ...\\
d_1\log\vert\sigma_1(\varepsilon_r)\vert & d_2\log\vert\sigma_2(\varepsilon_r)\vert & ... & d_{r_1+r_2}\log\vert\sigma_{r_1+r_2}(\varepsilon_r)\vert\\
(r_1+r_2)^{-1} & (r_1+r_2)^{-1} & ... & (r_1+r_2)^{-1}\\
\end{array}\right] \Big\vert,$$
where the square matrix is of size $(r_1+r_2)$.
\end{defin}

\vspace{0.5cm}

\noindent Remark that one has $R_K=\mathrm{Vol}(\mathbb{R}^{r_1+r_2}/\lambda(U_K)\oplus \mathbb{Z}e)(r_1+r_2)^{-1}=\mathrm{Vol}(H/\lambda(U_K))(r_1+r_2)^{-1/2}$, so the regulator is a volume obtained in the whole space, divided by the square root of the dimension of the whole space. An equivalent definition used in the litterature is

$$R_K=\Big\vert \mathrm{det}\left[\begin{array}{cccc}
d_1\log\vert\sigma_1(\varepsilon_1)\vert & d_2\log\vert\sigma_2(\varepsilon_1)\vert & ... & d_{r}\log\vert\sigma_{r}(\varepsilon_1)\vert\\
d_1\log\vert\sigma_1(\varepsilon_2)\vert & d_2\log\vert\sigma_2(\varepsilon_2)\vert & ... & d_{r}\log\vert\sigma_{r}(\varepsilon_2)\vert\\
... & ... & ...  & ...\\
d_1\log\vert\sigma_1(\varepsilon_r)\vert & d_2\log\vert\sigma_2(\varepsilon_r)\vert & ... & d_{r}\log\vert\sigma_{r}(\varepsilon_r)\vert\\
\end{array}\right] \Big\vert,$$
where one deletes the last column and the last line of the former matrix. To show equality of the two determinants, take the $(r_1+r_2)^2$ determinant from above and sum over all the columns to get a last column full of zeros (because the logarithm of the absolute norm of a unit is $0$) except for the last term at the bottom line where one gets $(r_1+r_2)(r_1+r_2)^{-1}=1$, and develop the determinant. By the same argument one knows that the final value doesn't depend on the column one erases, the only difference would be the sign which is removed by the absolute value outside the determinant.

This definition is the same as the one found in \cite{Neu} page 43, if one remarks that his $\lambda(.)$ is defined through a factorization of his $l(.)$ page 34 that has the same normalization for the metrics given here. 

We end this section by recalling that a CM field $K$ is a totally imaginary quadratic extension of a totally real number field.

\subsubsection{Elliptic curves}
Let $E/K$ be an elliptic curve over a number field $K$ polarized by $L=3(O)$ where $O$ is the neutral element. Let $m_K$ be the Mordell-Weil rank of $A(K)$. Let $\hat{h}=\hat{h}_{E,L}$ be the N\'eron-Tate height associated with the pair $(E,L)$. Let $<.,.>$ be the associated bilinear form, given by $$<P,Q>=\frac{1}{2}\Big(\hat{h}(P+Q)-\hat{h}(P)-\hat{h}(Q)\Big)$$ for any $P,Q\in{E(\bar{\mathbb{Q}})}$. 

\begin{defin}\label{reg abvar}
Let $P_1, ..., P_{m_K}$ be a basis of the Mordell-Weil group $E(K)$ modulo torsion. The regulator of $E/K$ is defined by $$\Reg(E/K)=\vert \det(<P_i,P_j>_{1\leq i,j\leq m_K})\vert,$$ where by convention an empty determinant is equal to $1$.
\end{defin}

\subsection{The Faltings height}

Let $E$ be an elliptic curve defined over a number field $K$. Let $S=\Spec({\mathcal O}_K)$, where
${\mathcal O}_K$ is the ring of integers of $K$ and let $\pi\colon {\mathcal E}\longrightarrow S $
be the N\'eron model of $E$ over $S$. Denote by $\varepsilon\colon S\longrightarrow {\mathcal E}$ the zero section of $\pi$ and by
$\omega_{{\mathcal E}/S}$ the sheaf of relative differentials
$$\omega_{{\mathcal E}/S}:=\varepsilon^{\star}\Omega_{{\mathcal
E}/S}\simeq\pi_{\star}\Omega_{{\mathcal E}/S}\;.$$

For any archimedean place $v$ of $K$, denote by $\sigma$ an embedding of $K$ in $\mathbb{C}$ associated to $v$, then the corresponding line bundle
$$\omega_{{\mathcal E}/S,\sigma}=\omega_{{\mathcal E}/S}\otimes_{{\mathcal O}_K,\sigma}\mathbb{C}\simeq H^0({\mathcal
E}_{\sigma}(\mathbb{C}),\Omega_{{\mathcal E}_\sigma}(\mathbb{C}))\;$$
can be equipped with a natural $L^2$-metric $\Vert.\Vert_{v}$ defined by
$$\Vert\alpha\Vert_{v}^2=\frac{i}{2}\int_{{\mathcal
E}_{\sigma}(\mathbb{C})}\alpha\wedge\overline{\alpha}\;.$$

The ${\mathcal O}_K$-module of rank one $\omega_{{\mathcal E}/S}$, together with the hermitian norms
$\Vert.\Vert_{v}$ at infinity defines an hermitian line bundle 
$\overline{\omega}_{{\mathcal E}/S}=({\omega}_{{\mathcal E}/S}, (\Vert .\Vert_v)_{v\in{M_K^\infty}})$ over $S$, which has a well defined Arakelov degree
$\widehat{\degr}(\overline{\omega}_{{\mathcal E}/S})$. Recall that for any hermitian line
bundle $\overline{\mathcal L}$ over $S$, the Arakelov degree of $\overline{\mathcal L}$
is defined as
$$\widehat{\degr}(\overline{\mathcal L})=\log\Card\left({\mathcal L}/{s{\mathcal
O}}_K\right)-\sum_{v\in{M_{K}^{\infty}}}d_v\log\Vert
s\Vert_{v}\;,$$
where $s$ is any non zero section of $\mathcal L$ (the result does not depend on the choice of $s$ in view of the product formula). 

We now give the definition of the classical Faltings height.

\begin{defin}\label{faltings}  The Faltings height of $E/K$ is defined as
$$h_F(E/K):=\frac{1}{[K:\mathbb{Q}]}\widehat{\degr}(\overline{\omega}_{{\mathcal
E}/S}).$$
\end{defin}

We like to think of the Faltings height as a volume, in particular as a positive real number. An inequality of Bost reads $h_F(E/K)\geq -\frac{1}{2}\log 2\pi^2$ (a detailed proof being accessible in \cite{GauR}), we thus would like to rescale the height slightly in the following way:

\begin{nota}
Let $\hFplus(E/K)=h_F(E/K)+\frac{1}{2}\log(2\pi^2).$
\end{nota}

We now obtain a non-negative number that can be interpreted as a volume. This little change of normalization also helps simplifying some formulas in the sequel. We will continue to call this number the Faltings height or the differential height.

The Faltings height doesn't depend on any choice of polarization on $E$. When $E/K$ is semi-stable, this height only depends on the $\overline{\mathbb{Q}}$-isomorphism class of $E$. If $E/K$ is not semi-stable, one may use Chai's base change conductor, take formula (\ref{base change conductor}) in the sequel as a complementary definition. The definition can be extended to abelian varieties of any fixed dimension. To see that it is really a height (meaning verifying the Northcott property), see for instance \cite{Falt} Satz~1, page 356 and 357. This can also be derived from the comparison of heights in \cite{Paz3} (an article based on former work by Bost and David) and the work of Mumford on theta structures. 

If $K'/K$ is a number field extension, one has $\hFplus(E/K')\leq \hFplus(E/K)$. If $E/K$ is semi-stable, one defines the stable height by $\hFplus(E/\overline{\mathbb{Q}}):=\hFplus(E/K)$, which is invariant by number field extension. Let us state the following formula (proved in \cite{CorSil} page 254) which will be crucial in the sequel:

\begin{thm}(Silverman)\label{elliptique}
Let $E/K$ be a semi-stable elliptic curve over a number field $K$ of degree $d$. One has
\[
\hFplus(E/K)=\frac{1}{12d}\left[\log N_{K/\mathbb{Q}}(\Delta_E)-\sum_{v\in{M_K^{\infty}}}d_v \log\Big(\vert \Delta(\tau_v) \vert (2\Ima\tau_v)^6\Big)\right],
\]
where $\Delta_E$ is the minimal discriminant of $E$; at archimedean places one chooses $\tau_v$ in the upper half plane such that $E(\overline{K}_v)\simeq \mathbb{C}/\mathbb{Z}+\tau_v\mathbb{Z}$ and $\Delta(\tau_v)=q\prod_{n=1}^{+\infty}(1-q^n)^{24}$ is the modular discriminant, with $q=\exp(2\pi i\tau_v)$.
\end{thm}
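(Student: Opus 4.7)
The strategy is to compute $\widehat{\degr}(\overline{\omega}_{\mathcal{E}/S})$ directly using a rational section of $\omega_{\mathcal{E}/S}$, then apply the normalizations $h_F(E/K)=\frac{1}{d}\widehat{\degr}(\overline{\omega}_{\mathcal{E}/S})$ and $\hFplus(E/K)=h_F(E/K)+\frac{1}{2}\log(2\pi^2)$. After a harmless base change, pick a global Weierstrass equation $y^2+a_1xy+a_3y=x^3+a_2x^2+a_4x+a_6$ for $E$ over $K$, with discriminant $\Delta_W\in K^*$ and invariant differential $\omega_0=dx/(2y+a_1x+a_3)$, viewed as a rational section of $\omega_{\mathcal{E}/S}$. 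The product formula extends the Arakelov degree formula to
\[
\widehat{\degr}(\overline{\omega}_{\mathcal{E}/S})=\sum_{v\nmid\infty}\ordv(\omega_0)\log\#k_v-\sum_{v\in M_K^\infty}d_v\log\|\omega_0\|_v,
\]
reducing the proof to two local computations that must glue so that the dependence on $\Delta_W$ cancels.

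\emph{Finite part.} Semi-stability of $E/K$ is exactly what guarantees that at every finite $v$, the stalk $\omega_{\mathcal{E}/S,v}$ is generated by the invariant differential $\omega_{v,\min}$ of a minimal Weierstrass model with discriminant $\Delta_{v,\min}$, with no base-change conductor correction to worry about. Writing $\omega_0=\lambda_v\omega_{v,\min}$ and invoking the classical transformation rule of Weierstrass equations ($\Delta\mapsto u^{-12}\Delta$ when $\omega\mapsto u\omega$) yields $\Delta_W=\lambda_v^{-12}\Delta_{v,\min}$, hence $\ordv(\omega_0)=\frac{1}{12}\bigl(v(\Delta_{v,\min})-v(\Delta_W)\bigr)$. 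Summing, and using $\sum_{v\nmid\infty}v(a)\log\#k_v=\log|N_{K/\mathbb{Q}}(a)|$ for $a\in K^*$ (a consequence of the product formula), the finite contribution equals $\tfrac{1}{12}\log N_{K/\mathbb{Q}}(\Delta_E)-\tfrac{1}{12}\log|N_{K/\mathbb{Q}}(\Delta_W)|$, where $\Delta_E$ is the global minimal discriminant ideal.

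\emph{Archimedean part.} At each $v\in M_K^\infty$, fix the uniformization $E_\sigma(\mathbb{C})=\mathbb{C}/\Lambda_v$ with $\Lambda_v=\omega_{1,v}(\mathbb{Z}+\tau_v\mathbb{Z})$ chosen so that $\omega_0$ corresponds to $dz$. A direct calculation gives $\|\omega_0\|_v^2=\frac{i}{2}\int_{E_\sigma(\mathbb{C})}dz\wedge d\bar z=|\omega_{1,v}|^2\Ima\tau_v$, while the lattice form of the modular identity reads $\sigma_v(\Delta_W)=(2\pi)^{12}\Delta(\tau_v)/\omega_{1,v}^{12}$, where $\Delta(\tau)=q\prod_{n\geq 1}(1-q^n)^{24}$ is precisely the normalization appearing in the statement (the $(2\pi)^{12}$ comes from the passage between the lattice-Eisenstein expression $g_2^3-27g_3^2$ and the $q$-expansion). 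Eliminating $|\omega_{1,v}|$ and summing over archimedean places, $-\sum_{v|\infty}d_v\log\|\omega_0\|_v$ becomes
\[
-d\log(2\pi)-\tfrac{1}{12}\sum_v d_v\log|\Delta(\tau_v)|+\tfrac{1}{12}\log|N_{K/\mathbb{Q}}(\Delta_W)|-\tfrac{1}{2}\sum_v d_v\log\Ima\tau_v.
\]

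\emph{Combining.} When the two contributions are added, the two $\log|N_{K/\mathbb{Q}}(\Delta_W)|$ terms cancel, confirming independence of the chosen Weierstrass equation. Dividing by $d$ produces $h_F(E/K)$ carrying an extra $-\log(2\pi)$; the shift in the definition of $\hFplus$ contributes $+\frac{1}{2}\log(2\pi^2)$, and $-\log(2\pi)+\frac{1}{2}\log(2\pi^2)=-\frac{1}{2}\log 2$. Rewriting this as $-\frac{1}{2d}\sum d_v\log 2$ (using $\sum d_v=d$) and merging with $-\frac{1}{2d}\sum d_v\log\Ima\tau_v$ gives $-\frac{1}{12d}\sum d_v\log(2\Ima\tau_v)^6$, which is exactly the stated formula. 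The main technical difficulty is simultaneous bookkeeping of the $(2\pi)^{12}$ factor between the two normalizations of $\Delta$, Bost's $\frac{1}{2}\log(2\pi^2)$ rescaling, and the exact cancellation of $\log|N_{K/\mathbb{Q}}(\Delta_W)|$ between finite and infinite places; any sign slip in the transformation rule for the differential at finite places destroys the cancellation.
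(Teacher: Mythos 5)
The paper does not actually prove this theorem: it is stated as Silverman's formula with a citation to \cite{CorSil}, page 254, the only adjustment being the renormalization $\hFplus=h_F+\frac{1}{2}\log(2\pi^2)$. Your proposal reconstructs the standard proof from that reference, and it is correct: you compute $\widehat{\degr}(\overline{\omega}_{{\mathcal E}/S})$ with the rational section $\omega_0$ attached to a global Weierstrass equation, use at finite places the identification of the N\'eron differential with the local minimal differential to get $\ordv(\omega_0)=\frac{1}{12}\big(v(\Delta_{v,\min})-v(\Delta_W)\big)$, and at archimedean places the identities $\Vert\omega_0\Vert_v^2=|\omega_{1,v}|^2\Ima\tau_v$ and $\sigma_v(\Delta_W)=(2\pi)^{12}\Delta(\tau_v)\,\omega_{1,v}^{-12}$, the product formula cancelling the two $\log|N_{K/\mathbb{Q}}(\Delta_W)|$ terms. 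The constant bookkeeping checks out: $-\log(2\pi)+\frac{1}{2}\log(2\pi^2)=-\frac{1}{2}\log 2$, which, rewritten as $-\frac{1}{12d}\sum_{v\in M_K^\infty}d_v\log 2^6$ and merged with $-\frac{1}{2d}\sum_v d_v\log\Ima\tau_v$, yields exactly the $(2\Ima\tau_v)^6$ of the statement. So what you supply is, in substance, the proof the paper delegates to the literature.

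Two small inaccuracies are worth flagging, though neither invalidates the argument under the stated hypotheses. First, the "harmless base change" at the start is unnecessary (a Weierstrass equation over $K$ itself always exists) and slightly misleading, since for a non-semi-stable curve a base change would alter $\hFplus(E/K)$. Second, semi-stability is not "exactly what guarantees" that $\omega_{{\mathcal E}/S}$ is locally generated by the minimal invariant differential: the identity component of the N\'eron model is the smooth locus of the minimal Weierstrass model for every reduction type, so that local identification (and hence the displayed formula for the relative height) holds without semi-stability, and the base-change conductor plays no role in this computation; semi-stability matters in the paper mainly because it makes $\hFplus(E/K)$ invariant under field extension, which is how the formula gets used later.
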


We recall that $\vert\Delta(\tau_v)\vert(\Ima\tau_v)^6$ is independent of the choice of $\tau_v$ in his orbit under the classical action of $\mathrm{SL}_2(\mathbb{Z})$ on the upper half plane. At finite places we focus on the bad reduction locus with the following classical quantities.

\begin{defin}
Let $E$ be an elliptic curve over a number field $K$. Let us denote by $N^0_{E/K}$ the norm of the product of the bad reduction primes of $E$. 
\end{defin}

\begin{defin}
Let $E$ be an elliptic curve over a number field $K$. We say that a prime $\mathfrak{p}$ of $\mathcal{O}_K$ is of \emph{stable bad reduction} if for any number field extension $K'/K$ and any prime $\mathfrak{p}'$ of $\mathcal{O}_K$ above $\mathfrak{p}$, the elliptic curve $E/K'$ has bad reduction at $\mathfrak{p}'$. If the prime $\mathfrak{p}$ is a prime of bad reduction for $E/K$ that is not a stable one, it will be referred to as an \emph{unstable bad prime} of $E/K$.
\end{defin}

Regarding archimedean places, let us recall what the injectivity diameter is. 
\begin{defin}
Let $E$ be a complex elliptic curve. Let $L$ be a polarization on $E$. Let $T_E$ be the tangent space of $E$ and $\Gamma_E$ its period lattice and $H_L$ the associated Riemann form on $T_E$. The injectivity diameter is the positive number $\displaystyle{\rho(E,L)=\min_{\gamma\in{\Gamma\backslash \{0\}}}\sqrt{H_L(\gamma,\gamma)}}$, \textit{i.e.} the first minimum in the successive minima of the period lattice of $E$.
\end{defin}

If one writes $(E,L)\simeq (\mathbb{C}/\mathbb{Z}+\tau\mathbb{Z}, H_{\tau})$ as polarized complex abelian varieties of dimension 1, where $\tau$ is a complex period verifying $\mathrm{Im}\tau>\sqrt{3}/2$ and $\vert\mathrm{Re}\tau\vert \leq1/2$, then one has $$\rho(E,L)=(\mathrm{Im}\tau)^{-1/2}.$$

We will now carry on a comparison between inequalities relating the number field invariants and inequalities relating elliptic curve invariants.

\section{Number fields}\label{nb}

We start this paragraph by gathering some unconditional inequalities between different classical invariants of number fields, the degree, the absolute discriminant and the regulator. One may consult \cite{Sam} for a reference. We will then give the proof of Theorem \ref{reg fini intro}.

\begin{thm} (Hermite-Minkowski)\label{HM}
Let $K$ be a number field of discriminant $D_K$ and degree $d$. Then
\[
\vert D_K\vert^{1/2}\geq \left(\frac{\pi}{4}\right)^{r_2}\frac{d^d}{d!}
\]
and if $d\geq 2$
\[
\vert D_K\vert\geq \frac{\pi}{3}\left(\frac{3\pi}{4}\right)^{d-1}.
\]
\end{thm}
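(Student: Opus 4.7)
The plan is to prove both inequalities at once via geometry of numbers, using Minkowski's lattice point theorem applied to the standard archimedean embedding of $\mathcal{O}_K$.

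First, I would set up the Minkowski embedding. Place $K \hookrightarrow K\otimes_{\mathbb{Q}}\mathbb{R} \simeq \mathbb{R}^{r_1}\times\mathbb{C}^{r_2}$ via $\alpha \mapsto (\sigma_1(\alpha),\ldots,\sigma_{r_1}(\alpha),\sigma_{r_1+1}(\alpha),\ldots,\sigma_{r_1+r_2}(\alpha))$, identify $\mathbb{C}^{r_2}$ with $\mathbb{R}^{2r_2}$ via real/imaginary parts, and endow $\mathbb{R}^d$ with Lebesgue measure. A direct computation on the basis of $\mathcal{O}_K$ compared to the usual coordinate basis shows that $\mathcal{O}_K$ embeds as a full lattice of covolume $v = 2^{-r_2}|D_K|^{1/2}$. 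The key convex symmetric body is
\[
B_t=\bigl\{(x_1,\ldots,x_{r_1},z_1,\ldots,z_{r_2}) \; : \; \textstyle\sum_{i}|x_i|+2\sum_{j}|z_j|\leq t\bigr\},
\]
whose volume is $\mathrm{Vol}(B_t)=2^{r_1}(\pi/2)^{r_2}\,t^d/d!$; I would compute this by first using polar coordinates on each complex factor (picking up $(2\pi)^{r_2}$ and $\prod r_j\,dr_j$), then substituting $u_j=2r_j$, and finally recognizing a Dirichlet integral in the remaining simplex.

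Next, I would apply Minkowski's theorem: choose $t$ so that $\mathrm{Vol}(B_t) = 2^d v$, which gives $t^d = (4/\pi)^{r_2}\,d!\,|D_K|^{1/2}$. Then there exists a nonzero $\alpha\in\mathcal{O}_K$ lying in $B_t$. The arithmetic-geometric inequality on the $d=r_1+2r_2$ factors of the norm yields
\[
1\leq|N_{K/\mathbb{Q}}(\alpha)|=\prod_{i=1}^{r_1}|\sigma_i(\alpha)|\cdot\prod_{j=1}^{r_2}|\sigma_{r_1+j}(\alpha)|^2 \leq \biggl(\frac{1}{d}\sum_i|\sigma_i(\alpha)|+\frac{2}{d}\sum_j|\sigma_{r_1+j}(\alpha)|\biggr)^d \leq (t/d)^d,
\]
so $t\geq d$, and raising to the $d$-th power and rearranging gives the first inequality $|D_K|^{1/2}\geq(\pi/4)^{r_2}\,d^d/d!$.

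For the second inequality, since $\pi/4<1$ the lower bound from the first inequality is minimized by taking $r_2$ as large as possible, namely $r_2=\lfloor d/2\rfloor$. Setting $M(d)=(\pi/4)^{2\lfloor d/2\rfloor}(d^d/d!)^2$, it suffices to show $M(d)\geq (\pi/3)(3\pi/4)^{d-1}$ for $d\geq 2$. The base case $d=2$ is equality: both sides equal $\pi^2/4$ (and indeed $|D_K|\geq\pi$ in the imaginary quadratic case, $|D_K|\geq 4$ in the real quadratic case). For the inductive step, I would compute
\[
\frac{M(d+1)}{M(d)} = (\pi/4)^{\varepsilon_d}\bigl(1+1/d\bigr)^{2d}, \qquad \varepsilon_d\in\{0,2\},
\]
and check that in both parity cases this ratio exceeds $3\pi/4$ (the common ratio of consecutive values of the right-hand side): if $\varepsilon_d=0$, monotonicity of $(1+1/d)^{2d}$ gives ratio $\geq 4\geq 3\pi/4$; if $\varepsilon_d=2$, the ratio is at least $(\pi/4)^2\cdot 4=\pi^2/4$, which exceeds $3\pi/4$ because $\pi>3$.

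The main obstacles are mild: the Dirichlet integral volume computation of $B_t$ is mechanical but easy to mis-track (because of the factor $2$ in front of the $|z_j|$ and the covolume factor $2^{-r_2}$ from identifying $\mathbb{C}^{r_2}$ with $\mathbb{R}^{2r_2}$), and the parity-sensitive induction for the second inequality requires confirming that the worst case really is $r_2=\lfloor d/2\rfloor$ and that the ratio bound holds in both parities. Everything else reduces to Minkowski and AM-GM.
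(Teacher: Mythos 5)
Your argument is correct, and it is essentially the classical proof: the paper does not prove this theorem at all (it is quoted as Hermite--Minkowski with a pointer to Samuel's book), and your Minkowski convex-body computation with the region $\sum_i|x_i|+2\sum_j|z_j|\leq t$, the covolume $2^{-r_2}|D_K|^{1/2}$, and AM--GM, followed by the induction $M(d+1)/M(d)=(\pi/4)^{\varepsilon_d}(1+1/d)^{2d}\geq 3\pi/4$, is exactly the standard route found there. One inessential slip: your parenthetical "$|D_K|\geq\pi$ in the imaginary quadratic case" is false ($\mathbb{Q}(\sqrt{-3})$ has $|D_K|=3<\pi$) and is also not what your first inequality yields (it gives $\pi^2/4$); it plays no role, since the base case you actually use, $M(2)=\pi^2/4=\frac{\pi}{3}\cdot\frac{3\pi}{4}$, is correct.
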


\begin{rem}
For a better bound when $d$ is large, see Odlyzko \cite{Odl1} who gives $$\vert D_K\vert^{1/d} \geq 60^{r_1/d} 22^{2r_2/d} + o(1)$$ when $d\to+\infty$.
\end{rem}

The following classical theorem will be used in the sequel.

\begin{thm} (Hermite)\label{Her}
There is only a finite number of number fields with bounded discriminant.
\end{thm}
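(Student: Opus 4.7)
The plan is to combine the lower bounds from Theorem \ref{HM} with a geometry-of-numbers argument to reduce to Northcott's finiteness for integer polynomials with bounded coefficients. First, I would observe that the second inequality of Theorem \ref{HM} gives $|D_K| \geq \tfrac{\pi}{3}(\tfrac{3\pi}{4})^{d-1}$ for $d \geq 2$, which grows exponentially in $d$. Hence any bound $|D_K| \leq B$ forces $d$ to lie in an explicit finite range, and it suffices to prove the statement for each fixed degree $d$.

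Fix $d$ and bound $B$. For a number field $K$ of degree $d$ with $|D_K| \leq B$, embed the ring of integers $\mathcal{O}_K$ as a lattice $\Lambda_K$ in $\mathbb{R}^{r_1} \times \mathbb{C}^{r_2} \simeq \mathbb{R}^d$ via the embeddings $\sigma_1, \ldots, \sigma_{r_1+r_2}$; this lattice has covolume $2^{-r_2}|D_K|^{1/2}$, which is bounded by a constant depending only on $d$ and $B$. I would then apply Minkowski's convex body theorem to a symmetric convex region obtained by taking $|x_1|$ large (say, bounded by some constant $C = C(d,B)$ chosen so Minkowski's inequality applies) while requiring $|x_i| \leq 1/2$ for $i \neq 1$, with suitable real/complex normalization at each place. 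This produces a nonzero $\alpha \in \mathcal{O}_K$ with $|\sigma_i(\alpha)| < 1$ for all $i \neq 1$. Since $|N_{K/\mathbb{Q}}(\alpha)| \geq 1$, the conjugate $\sigma_1(\alpha)$ is strictly larger in absolute value than all the others, hence distinct from them and in particular $\alpha$ has $d$ distinct conjugates. Consequently $\mathbb{Q}(\alpha) = K$.

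The minimal polynomial of $\alpha$ is monic of degree $d$ with integer coefficients given by the elementary symmetric functions of the $\sigma_i(\alpha)$, and these are bounded in terms of $d$, $B$, and $C$ only. There are only finitely many such polynomials, hence only finitely many possible $\alpha$ up to conjugation, hence only finitely many fields $K$. The main obstacle is the geometry-of-numbers step: one must set up the symmetric convex body so that Minkowski applies (its volume must exceed $2^d$ times the covolume of $\Lambda_K$), while simultaneously ensuring that the resulting element is a primitive generator rather than lying in a proper subfield. Choosing the body so that exactly one ``long'' direction dominates handles both requirements at once, and the remainder of the argument is the standard Northcott property for monic integer polynomials with bounded coefficients.
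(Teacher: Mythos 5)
The paper offers no proof of this statement --- it is invoked as Hermite's classical theorem --- so the only question is whether your argument stands on its own. Your route is the standard one (bound the degree via Theorem \ref{HM}, then for fixed degree use Minkowski's convex body theorem to produce a generator of $K$ whose conjugates are bounded, and finish with the finiteness of monic integer polynomials of bounded degree and height), and the degree-bounding step, the covolume computation, and the final Northcott step are all fine.

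The gap is in the geometry-of-numbers step when $K$ has no real embedding, so that the distinguished ``long'' coordinate is a complex one. Your inference is: $|\sigma_i(\alpha)|<1$ for $i\neq 1$ and $|N_{K/\mathbb{Q}}(\alpha)|\geq 1$, hence $\sigma_1(\alpha)$ is strictly larger in absolute value than all other conjugates, hence attained by only one embedding, hence $\mathbb{Q}(\alpha)=K$. But when $\sigma_1$ is complex, the conjugate embedding $\overline{\sigma_1}$ is also an embedding of $K$ and $|\overline{\sigma_1}(\alpha)|=|\sigma_1(\alpha)|$, so the absolute-value comparison cannot separate these two; if $\sigma_1(\alpha)$ is real then $\sigma_1$ and $\overline{\sigma_1}$ agree on $\alpha$ and $\mathbb{Q}(\alpha)$ may be a proper subfield. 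The symmetric body $\{|z_1|\leq C,\ |x_i|\leq 1/2\}$ really does contain such bad points: for $K$ imaginary quadratic it contains nonzero rational integers, and for $K=\mathbb{Q}(\zeta_5)$ it contains powers $\varepsilon^n$ of the fundamental unit of $\mathbb{Q}(\sqrt{5})$ (large at one complex place, smaller than $1$ at the other, real everywhere), which generate only $\mathbb{Q}(\sqrt{5})$; so ``one long direction dominates'' does not by itself force primitivity, and Minkowski may hand you precisely such a point. The standard repair, which must be stated, is to use an asymmetric box at the distinguished complex place: $|\mathrm{Re}\,z_1|\leq 1/2$ and $|\mathrm{Im}\,z_1|\leq C$. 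Then the norm inequality gives $|\sigma_1(\alpha)|\geq 1$, which with $|\mathrm{Re}\,\sigma_1(\alpha)|\leq 1/2$ forces $\mathrm{Im}\,\sigma_1(\alpha)\neq 0$, so $\sigma_1(\alpha)\neq\overline{\sigma_1(\alpha)}$; combined with $|\sigma_1(\alpha)|>|\sigma_i(\alpha)|$ for the remaining embeddings, the value $\sigma_1(\alpha)$ is attained by exactly one embedding, giving $\mathbb{Q}(\alpha)=K$, after which the rest of your argument goes through unchanged.
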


The following two theorems compare the regulator with the degree of the number field. The second statement is even more precise because it gives a lower bound on a \textit{relative} regulator.

\begin{thm}(Friedman, \cite{Fri} page 620, Corollary)
Let $K$ be a number field of regulator $R_K$, degree $d$ and number of roots of unity $w_K$. Let $r_1$ be the number of real embeddings of $K$. Then
$$\frac{R_K}{w_K}\geq 0.0031\exp(0.241d+0.497r_1).$$
\end{thm}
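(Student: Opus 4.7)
The plan is to exploit the analytic class number formula to reduce the statement to an upper bound on the residue at $s=1$ of the Dedekind zeta function $\zeta_K(s)$, and then to obtain such a bound via Weil's explicit formula applied to a carefully chosen test function.

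First I would recall the analytic class number formula
$$\mathrm{Res}_{s=1}\zeta_K(s) = \frac{2^{r_1}(2\pi)^{r_2} h_K R_K}{w_K\sqrt{|D_K|}}.$$
Since $h_K\geq 1$, any upper bound on the residue in terms of $|D_K|$, $r_1$, $r_2$ immediately yields a lower bound on $R_K/w_K$. Combined with an Odlyzko-type lower bound on $|D_K|$ (such as the one recalled in the remark after Theorem~\ref{HM}), this produces a lower bound for $R_K/w_K$ depending only on $d$ and $r_1$. However, a direct combination is too wasteful to give the sharp numerical constants $0.0031$, $0.241$, $0.497$.

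The refinement I would use is Weil's explicit formula for the completed zeta function $\Lambda_K(s)=|D_K|^{s/2}\Gamma_{\mathbb{R}}(s)^{r_1}\Gamma_{\mathbb{C}}(s)^{r_2}\zeta_K(s)$. Integrating $-\Lambda_K'/\Lambda_K$ against a nonnegative, compactly supported test function $F$ whose Mellin transform $\widehat{F}$ is also nonnegative, one gets
$$\textrm{(nonnegative sum over nontrivial zeros)}\ \geq\ -\tfrac12(\log|D_K|)\widehat{F}(1)\ +\ r_1 J_{\mathbb{R}}(F)+r_2 J_{\mathbb{C}}(F)\ -\ \log\!\tfrac{R_K h_K}{w_K}\cdot \widehat{F}(0)\ +\ \textrm{(nonnegative prime sum)},$$
where $J_{\mathbb{R}},J_{\mathbb{C}}$ are explicit archimedean integrals involving $\Gamma'/\Gamma$. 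Dropping the two nonnegative contributions, using $h_K\geq 1$, and calibrating $F$ so that the coefficient of $\log|D_K|$ vanishes (or combines favorably with an Odlyzko bound), one isolates an inequality of the form
$$\log\!\bigl(R_K/w_K\bigr)\ \geq\ C(F)\ +\ \alpha(F)\,d\ +\ \beta(F)\,r_1.$$

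The main obstacle is then the variational problem of choosing $F$ optimally: one must maximize the linear combination $\alpha(F)d+\beta(F)r_1$ subject to the admissibility constraints on $F$ and $\widehat{F}$, while keeping the discriminant coefficient under control. Friedman solves this with an explicit one-parameter family of test functions whose optimization produces exactly the exponents $0.241$ and $0.497$, and the multiplicative constant $0.0031 = e^{C(F)}$ then collects the remaining archimedean contributions. The delicate points are tracking the discriminant cancellation and verifying the positivity conditions on $F$ and $\widehat{F}$ that legitimize dropping the zero and prime sums; these positivity checks, rather than the algebraic manipulations, are what pin down the specific numerical values.
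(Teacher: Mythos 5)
First, a point of comparison: the paper does not prove this statement at all --- it is quoted directly from Friedman \cite{Fri} (Corollary, p.~620) and used as a black box. So your sketch must be judged as a reconstruction of Friedman's argument, and as such it contains a genuine gap, which is precisely the difficulty Friedman's method is designed to circumvent. The class number formula couples $R_K$ to $h_K$ in the numerator:
$$\frac{R_K}{w_K}=\frac{\sqrt{|D_K|}\,\mathrm{Res}_{s=1}\zeta_K(s)}{2^{r_1}(2\pi)^{r_2}\,h_K},$$
so the inequality $h_K\geq 1$ works against you: it only yields a lower bound for the product $h_KR_K/w_K$ (a Brauer--Siegel-type statement, as in the theorem recalled at the end of section~\ref{nb}), never for $R_K/w_K$ alone. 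To isolate $R_K/w_K$ you would need an \emph{upper} bound on $h_K$, which is unavailable, since $h_K$ can be as large as roughly $\sqrt{|D_K|}$; no choice of test function can repair this. Moreover, your displayed ``explicit formula'' is not correct as stated: in Weil's explicit formula the pole of $\Lambda_K$ contributes values of the transform of the test function at the pole (terms like $\widehat F(0)+\widehat F(1)$), not a term $\log(h_KR_K/w_K)\cdot\widehat F(0)$; the residue --- hence the regulator --- simply does not occur in that formula, which is why this machinery yields discriminant bounds (Odlyzko) and bounds for $h_KR_K$, but not the stated inequality.

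The missing idea is a device that separates $R_K$ from $h_K$. Friedman's actual route (building on Zimmert \cite{Zim}, and in the spirit later developed with Skoruppa \cite{FriSko}) is to prove an analytic formula for the regulator itself: $R_K/w_K$ is expressed, for each ideal class separately, as a sum over ideals in that class of explicitly positive archimedean integrals of incomplete-gamma/Bessel type coming from a Hecke-style integral representation. Because the formula is attached to a single ideal class, the class number never has to be divided out; positivity then allows one to discard all but finitely many terms, and the optimization over the remaining free parameter is what produces the constants $0.0031$, $0.241$, $0.497$. The test-function calibration you describe is indeed the flavor of the final step, but it takes place inside that per-class regulator formula, not inside Weil's explicit formula combined with the class number formula, where the argument cannot get off the ground.
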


\begin{thm}(Friedman, Skoruppa, \cite{FriSko})\label{reg reg}
There exists absolute constants $c_1>0$ and $c_2>1$ such that for any number field extension $L/K$ the ratio of their regulators satisfies $$\frac{R_L}{R_K}\geq \big(c_1 c_2^{[L:K]}\big)^{[K:\mathbb{Q}]},$$ and one can take $c_1=1/(11.5)^{39}$ and $c_2=1.15$.
\end{thm}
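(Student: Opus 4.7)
My approach would be analytic, based on the Dedekind zeta function. The analytic class number formula reads
\[
\operatorname{Res}_{s=1}\zeta_F(s)=\frac{2^{r_1(F)}(2\pi)^{r_2(F)}\,h_F\,R_F}{w_F\sqrt{|D_F|}},
\]
so taking the ratio for $F=L$ and $F=K$ isolates $R_L/R_K$ in terms of the residue ratio $\operatorname{Res}_{s=1}\zeta_L/\operatorname{Res}_{s=1}\zeta_K$ together with the discriminants, signatures, class numbers and roots of unity. Hence I would reduce the problem to a lower bound on that residue ratio, while controlling the remaining factors with, for instance, the Minkowski bound applied to the relative ideal class group to eliminate $h_L/h_K$, and $\tfrac{1}{2}\log|D_L|\geq \tfrac{[L:\mathbb{Q}]}{[K:\mathbb{Q}]}\cdot\tfrac{1}{2}\log|D_K|$ via conductor-discriminant to handle the discriminants.

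The heart of the argument is a positivity/explicit-formula input. The quotient $\zeta_L(s)/\zeta_K(s)$, written after Brauer induction as a product of Artin $L$-functions, admits a completed form whose logarithmic derivative can be integrated, on the critical line, against any nonnegative Paley--Wiener test function $F$. This produces an identity of the shape
\[
\log\frac{\operatorname{Res}_{s=1}\zeta_L}{\operatorname{Res}_{s=1}\zeta_K}\;\geq\;\sum_{v\mid \infty}(\text{$\Gamma$-contribution})\;-\;\sum_{\mathfrak{p},m}(\text{prime power contribution})
\]
where the prime sum is truncated by the compact support of $\widehat{F}$ and has nonnegative Dirichlet coefficients (this is the crucial point: the Dirichlet coefficients of $\zeta_L(s)/\zeta_K(s)$ are nonnegative, a consequence of Aramata--Brauer).

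Next I would, following Zimmert, Odlyzko and Friedman--Skoruppa, specialize $F$ to an explicit kernel of Fej\'er type whose Mellin transform is nonnegative and compactly supported, so that the archimedean part evaluates in closed form and scales linearly with $[L:\mathbb{Q}]$ and $r_1(L),r_2(L)$, while the prime sum involves only finitely many $\log N(\mathfrak{p})^m$ terms, uniformly bounded. By optimizing in the support width of $F$, the archimedean sum produces a term of the form $[L:\mathbb{Q}]\log c_2$ with $c_2>1$, and the prime sum together with the auxiliary factors gathered in the first paragraph contribute only $[K:\mathbb{Q}]\log c_1$. Rewriting this as $R_L/R_K\geq (c_1\,c_2^{[L:K]})^{[K:\mathbb{Q}]}$ and tracking constants through the computation should yield the stated values $c_1=1/(11.5)^{39}$, $c_2=1.15$.

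The main obstacle is the design and the optimization of the test function $F$: it must simultaneously be nonnegative, allow explicit evaluation of its Fourier/Mellin transform at prime powers (so that only a short list of Euler factors enters), and be shaped so that the archimedean $\Gamma$-factor side produces a gain that is genuinely exponential in $[L:K]$ rather than merely polynomial. A second technical difficulty is the uniform control of the prime sum for ramified primes and of the auxiliary class-number ratio $h_L/h_K$; these are the steps where the numerical constants $11.5$ and $1.15$ are pinned down, and where I would expect to spend most of the effort.
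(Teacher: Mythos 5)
The paper does not actually prove this statement: it is quoted from Friedman--Skoruppa \cite{FriSko}, so your proposal has to be measured against their argument. Their proof does not go through the analytic class number formula at all. It works directly with the lattice of relative units (units of $L$ whose norm to $K$ is a root of unity), sums an explicit positive test function over that lattice in the style of Zimmert's and Skoruppa's earlier regulator bounds (a theta/Mellin-transform positivity argument), and bounds its covolume --- which is $R_L/R_K$ up to a controlled index --- from below. The class number never appears, and the only positivity used is that of the test function evaluated at genuine units, not any positivity attached to $\zeta_L/\zeta_K$.

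Your route has two concrete gaps. First, your ``crucial point'' is false: the Dirichlet coefficients of $\zeta_L(s)/\zeta_K(s)$ are not nonnegative in general. Already for $L/K$ quadratic the quotient is the $L$-function $L(s,\chi_{L/K})$, whose coefficients change sign (an inert prime contributes $-1$), and the same failure occurs for the logarithmic derivative, where an inert prime $\mathfrak{p}$ contributes $-\log N\mathfrak{p}$ at odd powers. Aramata--Brauer gives holomorphy of $\zeta_L/\zeta_K$ for Galois extensions, not positivity of coefficients, so the explicit-formula step in which you want to discard or uniformly bound the prime sum has no sign to lean on; moreover, an effective lower bound for the residue ratio is precisely the Siegel-zero problem, so pinning down $c_1$ and $c_2$ is not a matter of ``tracking constants''. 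Second, eliminating $h_L/h_K$ by a Minkowski-type count cannot work: any general upper bound for $h_L$ obtained by counting ideals below the Minkowski bound is of size $\sqrt{|D_L|}$ times logarithmic factors, which cancels exactly the factor $\sqrt{|D_L|/|D_K|}$ that the class number formula handed you, leaving no exponential gain in $[L:K]$. This is the classical obstruction --- the analytic formula controls the product $h_F R_F$, not $R_F$ alone --- and it is precisely why Zimmert, Friedman and Friedman--Skoruppa build arguments in which the class number either cancels (sums over ideal classes) or never enters (sums over units). As written, your plan would at best yield a Brauer--Siegel-type statement for $h_L R_L/(h_K R_K)$, not the stated inequality for $R_L/R_K$.
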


The next step is important, it provides a lower bound for the regulator in terms of the discriminant and of the degree of the field. One needs to pay attention, the inequality is trivial in some cases that are analyzed in the sequel.

\begin{thm}(Silverman \cite{Sil4}, Friedman \cite{Fri})\label{reg disc}
Let $K$ be a number field of discriminant $D_K$, regulator $R_K$ and degree $d$. Let $r_K$ denote the unit rank of $K$ and $r_0$ the maximum of the unit ranks of proper subfields of $K$. Then there exists a universal constant $c_3>0$ such that
\[
R_K\geq c_3 d^{-2d}\left(\log\frac{\vert D_K\vert}{d^d}\right)^{r_K-r_0}.
\]
\end{thm}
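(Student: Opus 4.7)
The plan is to combine Minkowski's second theorem on successive minima of the log-unit lattice with the classical bound of the discriminant by the Mahler measure of a field generator. Write $\Lambda=\lambda(U_K)\subset H$ for the log-unit lattice and let $\mu_1\le\cdots\le\mu_{r_K}$ be its successive minima in the Euclidean norm inherited from $\mathbb{R}^{r_1+r_2}$. Since $R_K$ equals the covolume of $\Lambda$ in $H$ up to a factor $(r_1+r_2)^{-1/2}$, Minkowski's second theorem yields
$$R_K \;\ge\; c(d)\,\prod_{i=1}^{r_K}\mu_i,$$
with $c(d)$ depending only on $d$ (through the volume of the $(r_1+r_2-1)$-dimensional unit ball). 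Everything then reduces to lower bounds on the individual $\mu_i$.

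For indices $i\le r_0$, it suffices to use the effective Kronecker-type bound $h(\varepsilon)\ge c/d$ for every nontorsion unit $\varepsilon\in U_K$ (e.g.\ a Dobrowolski bound, or simply the finiteness of algebraic integers of bounded height and degree), giving $\mu_i\ge c/d$. The substance of the theorem is concentrated in the range $r_0<i\le r_K$, where one must establish a lower bound of order $d^{-O(1)}\log(|D_K|/d^d)$.

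For such an $i$, let $\varepsilon_1,\dots,\varepsilon_i$ realize the first $i$ minima. They are multiplicatively independent modulo torsion; since $i>r_0$ they cannot all lie in a single proper subfield $L\subsetneq K$, for otherwise $\rank U_L\ge i>r_0$ would contradict the maximality of $r_0$. A quantitative primitive-element argument -- using short integer combinations, pigeonholed across the $d$ embeddings of $K$ -- extracts a single unit $\eta=\prod_j\varepsilon_j^{a_j}$ with $\mathbb{Q}(\eta)=K$ and $h(\eta)\le C(d)\,\mu_i$. For any such primitive unit, the classical chain
$$|D_K|\;\le\;|\disc(\eta)|\;\le\;d^{d}\,M(\eta)^{2(d-1)}\;=\;d^{d}\,\exp\!\bigl(2d(d-1)h(\eta)\bigr)$$
gives $h(\eta)\ge(2d(d-1))^{-1}\log(|D_K|/d^{d})$ and hence $\mu_i\ge c'\,d^{-O(1)}\log(|D_K|/d^d)$.

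Putting the two regimes together,
$$R_K\;\ge\;c(d)\Bigl(\tfrac{c}{d}\Bigr)^{r_0}\Bigl(c'\,d^{-O(1)}\log\tfrac{|D_K|}{d^{d}}\Bigr)^{r_K-r_0},$$
and absorbing every polynomial-in-$d$ prefactor (using $r_0\le r_K\le d$) into a single $d^{-2d}$ produces the announced inequality. The main obstacle is precisely the primitive-element step: starting from multiplicatively independent but individually imprimitive units $\varepsilon_1,\dots,\varepsilon_i$, one must exhibit a multiplicative combination that generates $K$ while losing only a factor polynomial in $d$ in height. Controlling this loss is what forces the $d^{-2d}$ factor in the final estimate and constitutes the delicate core of the argument.
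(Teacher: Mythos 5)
This theorem is not proved in the paper: it is quoted from Silverman \cite{Sil4} and Friedman \cite{Fri}, so the only comparison available is with Silverman's original strategy, and your outline is indeed a reconstruction of it (Minkowski's second theorem for the log-unit lattice, the observation that more than $r_0$ multiplicatively independent units must generate $K$ as a field, and the chain $|D_K|\leq|\disc(\eta)|\leq d^d M(\eta)^{2(d-1)}$ for a generator $\eta$). However, as a proof your text has a genuine gap, and it is exactly the step you yourself call the ``delicate core''. The lemma you assert --- from independent units $\varepsilon_1,\dots,\varepsilon_i$ with $i>r_0$ one can extract a monomial $\eta=\prod_j\varepsilon_j^{a_j}$ with $\mathbb{Q}(\eta)=K$ and $h(\eta)\leq C(d)\,\mu_i$ --- is not a routine pigeonhole in the multiplicative setting. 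For a pair of embeddings $\sigma\neq\tau$ of $K$, the exponent vectors $a$ with $\sigma(\eta)=\tau(\eta)$ form a subgroup of $\mathbb{Z}^i$ which is proper but may have finite index (this happens precisely when the ratios $\sigma(\varepsilon_j)/\tau(\varepsilon_j)$ are roots of unity), and finitely many proper finite-index subgroups can cover $\mathbb{Z}^i$: compare $\sqrt{2},\sqrt{3}$, where no monomial $\sqrt{2}^{\,a}\sqrt{3}^{\,b}$ generates $\mathbb{Q}(\sqrt{2},\sqrt{3})$ although the two elements do. So either you must prove that this covering phenomenon cannot occur for the units at hand, or you should abandon multiplicative combinations and take additive ones, $\eta=\varepsilon_1+k_2\varepsilon_2+\cdots$ with $0\leq k_j\leq d^2$: such an $\eta$ is no longer a unit, but it is still an algebraic integer generating $K$, which is all that $|D_K|\leq|\disc(\eta)|$ requires; the price is an additive term of size about $d\log d$ in $h(\eta)$, which forces a separate treatment of the range where $\log(|D_K|/d^d)$ is small (it can even be negative), a case your sketch ignores.

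The second weak point is the constant bookkeeping, which you dismiss as ``absorbing every polynomial-in-$d$ prefactor into $d^{-2d}$''. All your per-factor losses are raised to exponents of order $d$ ($r_0$ and $r_K-r_0$ can both be comparable to $d$), so they do not disappear into a universal constant: the Minkowski ball-volume constant already contributes roughly $d^{-d/2}$, and for the indices $i\leq r_0$ the bound $h(\varepsilon)\geq c/d$ you invoke is of Lehmer strength and is not known --- Dobrowolski or Blanksby--Montgomery type bounds lose additional powers of $d$ and $\log d$ per factor, while Northcott finiteness gives no universal constant at all, so it cannot produce the explicit $c_3 d^{-2d}$. Getting a bound of the announced shape therefore requires a genuinely more careful treatment of the first $r_0$ minima (this is where the relative regulator estimates of Friedman and Friedman--Skoruppa \cite{Fri,FriSko}, quoted as Theorem \ref{reg reg} in the paper, become relevant) rather than a per-unit Lehmer-type bound. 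In short: the architecture is the right one, but the two quantitative pillars --- the low-height generator of $K$ and the control of the degree-dependent prefactor --- are precisely what is missing.
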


The next proposition explains the case $r_K=r_0$.

\begin{prop} \label{CM}
One has $r_K=r_0$ if and only if $K$ is a CM field.
Moreover, if $K$ is a CM field and $K_0$ its maximal totally real subfield, then $$\frac{R_K}{R_{K_0}}=2^{s},\quad \mathrm{with}\, r_0-1\leq s \leq r_0.$$
\end{prop}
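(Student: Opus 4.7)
My plan is to split the equivalence into an easy direction and a hard direction, and then to derive the ratio formula by an explicit computation in the logarithmic embedding. For the easy implication, I observe that any proper subfield $K'\subsetneq K$ gives $U_{K'}\subset U_K$, hence $r_{K'}\leq r_K$ and so $r_0\leq r_K$. If $K$ is CM of degree $d=2m$ with maximal totally real subfield $K_0$ of degree $m$, then Dirichlet's theorem gives $r_K=0+m-1=m-1=r_{K_0}$; since $K_0$ is a proper subfield of $K$, this forces $r_0\geq r_{K_0}=r_K$, and therefore $r_0=r_K$.

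The converse is essentially Remak's theorem and is the main obstacle in the proof. Assuming $r_{K'}=r_K$ for some proper subfield $K'\subsetneq K$, $U_{K'}$ has finite index in $U_K$, so $\lambda(U_K)\otimes\mathbb{Q}=\lambda(U_{K'})\otimes\mathbb{Q}$ inside $\mathbb{R}^{r_1+r_2}$; the latter is contained in the linear subspace cut out by equality of coordinates at embeddings of $K$ sharing the same restriction to $K'$. A careful analysis of the signature data of $K/K'$ under this constraint forces $[K:K']=2$ with $K'$ totally real and $K$ totally imaginary, i.e.\ $K$ is CM and $K'=K_0$; I would follow Remak \cite{Rema} here.

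For the ratio formula, note that both $K$ and $K_0$ have $r_1+r_2=m$, so $\lambda$ lands in the same $\mathbb{R}^m$ with common hyperplane $H$ and common normalizing factor $(r_1+r_2)^{-1/2}$. Each real embedding of $K_0$ extends to a complex conjugate pair of embeddings of $K$, preserving $|u|$ for $u\in U_{K_0}$ while the weight $d_i$ changes from $1$ to $2$; hence $\lambda^K(u)=2\,\lambda^{K_0}(u)$ for every $u\in U_{K_0}$. Consequently $\lambda^K(U_{K_0})=2\,\lambda^{K_0}(U_{K_0})$ is a sublattice of $\lambda^K(U_K)$ inside the $(m-1)$-dimensional hyperplane $H$ whose covolume is $2^{m-1}$ times that of $\lambda^{K_0}(U_{K_0})$. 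Using $U_{K_0}\cap W_K=\{\pm 1\}$ to identify $[\lambda^K(U_K):\lambda^K(U_{K_0})]$ with the Hasse unit index $Q_K:=[U_K:W_K U_{K_0}]$, one obtains
\[
\frac{R_K}{R_{K_0}} \;=\; \frac{2^{m-1}}{Q_K}.
\]
To conclude, I would invoke the classical bound $Q_K\in\{1,2\}$: Kronecker's theorem gives $u/\overline{u}\in W_K$ for every $u\in U_K$, and the resulting homomorphism $U_K\to W_K/W_K^2$ vanishes on $W_K U_{K_0}$, so $Q_K$ divides $|W_K/W_K^2|=2$. Hence $R_K/R_{K_0}=2^{m-1}$ or $2^{m-2}$, and since $r_0=m-1$ this reads $s=r_0$ or $s=r_0-1$, as claimed.
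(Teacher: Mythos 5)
Your proposal gets the statement right in outline, and parts of it are genuinely nicer than the paper's treatment, but the central implication ``$r_K=r_0\Rightarrow K$ is CM'' is not actually proved: you defer it to Remak with only the phrase ``a careful analysis of the signature data \dots forces \dots'', which is exactly the content that needs establishing. Moreover, no unit-lattice or Remak-style argument is needed here; the paper settles it by pure counting. If $K'\subsetneq K$ attains $r_{K'}=r_K$, write $n=[K:K']\geq 2$, $a=r_1(K)$, $b=r_2(K)$, $a'=r_1(K')$, $b'=r_2(K')$. Equality of unit ranks gives $a+b=a'+b'$, and the degree formula gives $a+2b=n(a'+2b')$; eliminating $a'$ yields $n(b-b')=(n-1)(a+2b)>0$, so $b>b'\geq 0$, hence $b\geq 1$; then $nb\geq n(b-b')=(n-1)(a+2b)\geq (2n-2)b$ forces $n\leq 2$, so $n=2$, and substituting back gives $a+2b'=0$, i.e.\ $r_1(K)=0$ and $r_2(K')=0$: $K$ is a totally imaginary quadratic extension of the totally real field $K'$, which is then the maximal totally real subfield. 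You should either reproduce an argument of this kind or pin down the precise statement in the reference; as written, the hard half of the equivalence is missing.

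The remainder is correct and in places more self-contained than the paper. Your easy direction (for $K$ CM of degree $2m$, $r_K=m-1=r_{K_0}\leq r_0\leq r_K$) is simpler than the paper's, which obtains it as a by-product of the unit-index bound. Your covolume computation giving $R_K/R_{K_0}=2^{m-1}/Q_K$, with $Q_K=[U_K:W_KU_{K_0}]$ identified with the lattice index $[\lambda^K(U_K):\lambda^K(U_{K_0})]$, is a clean replacement for the paper's appeal to Lemma 4.15 of \cite{Wash}, and the common normalizing factor $\sqrt{m}$ is handled correctly. One small repair in the bound $Q_K\in\{1,2\}$: knowing that $u\mapsto u/\overline{u}$ maps into $W_K$ (Kronecker) and that the induced map to $W_K/W_K^2$ kills $W_KU_{K_0}$ only shows the map factors through $U_K/W_KU_{K_0}$; to conclude $Q_K\mid 2$ you also need injectivity of the induced map, i.e.\ that $\overline{u}/u=\zeta^2$ with $\zeta\in W_K$ implies $u\in W_KU_{K_0}$. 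This is the paper's one-line step: $\overline{u\zeta}=\overline{u}\,\zeta^{-1}=u\zeta$, so $u\zeta$ is fixed by complex conjugation, hence lies in $U_{K_0}$. With that line added and the converse direction filled in, your argument is complete.
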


\begin{proof}
Let $K$ be a number field of rank $r_K$ and $K_0$ be a strict subfield realizing $r_0=\rank(K_0)$. Then one has $r_K=r_1(K)+r_2(K)-1$ and $r_0=r_1(K_0)+r_2(K_0)-1$. If $r_K=r_0$, it gives
\begin{equation}\label{rr}
r_1(K)+r_2(K)=r_1(K_0)+r_2(K_0)
\end{equation}
on the one hand, and by the degree formula $r_1(K)+2r_2(K)=[K:K_0](r_1(K_0)+2r_2(K_0))$ on the other hand. It implies by substitution that
$$r_1(K)+2r_2(K)=[K:K_0](r_1(K)+r_2(K)+r_2(K_0))=[K:K_0](r_1(K)+2r_2(K)+r_2(K_0)-r_2(K)),$$ hence,
 \begin{equation}\label{KK} [K:K_0](r_2(K)-r_2(K_0))=([K:K_0]-1)(r_1(K)+2r_2(K))>0 \end{equation} as $[K:K_0]\geq 2$ for the last inequality and so $r_2(K)>0$. Next, using $r_2(K_0)\geq 0$ and $r_1(K)\geq0$ one obtains in (\ref{KK}) $$[K:K_0]r_2(K)\geq (2[K:K_0]-2)r_2(K)$$ which implies $2\geq [K:K_0]$, so $[K:K_0]=2$. Back to (\ref{KK}), one obtains $$2r_2(K)-2r_2(K_0)=r_1(K)+2r_2(K),$$ so $-2r_2(K_0)=r_1(K)$ and both numbers $r_1$ and $r_2$ are non-negative, hence must both be zero. So from (\ref{rr}) one obtains $r_2(K)=r_1(K_0)$. Finally we have $K_0$ with only real embeddings and $K$ a purely imaginary degree two extension of $K_0$, \textit{i.e.} $K$ is a CM field. 
 
 Reciprocally, if $K$ is a CM field, let $K_0$ be its maximal totally real subfield. Then we will show below that any base of $U_{K_0}$ modulo $\{\pm 1\}$ lifts to a base of a subgroup of index 1 or 2 in $U_K$ modulo roots of unity, hence the ranks are equal, which implies $r_K=r_0$.

Take a unit $\varepsilon$ in $K$. One will denote by a bar the complex conjugation from $K/K_0$. Consider $\bar{\varepsilon}/\varepsilon$. It is also a unit and its image by any of the complex embeddings of $K$ is of module 1. As a unit, it is also an algebraic integer. Hence just by using the definition of the height of an algebraic number, one has $h(\bar{\varepsilon}/\varepsilon)=0$. By a theorem of Kronecker, one obtains that $\bar{\varepsilon}/\varepsilon$ is a root of unity in $K$. Let us denote by $\mu_K$ the set of roots of unity of $K$. Writing $\varepsilon \bar{\varepsilon}=\varepsilon^2\frac{\bar{\varepsilon}}{\varepsilon}\in{K_0}$, one obtains that up to a torsion element, $\varepsilon^2\in{K_0}$. Moreover if $\bar{\varepsilon}/\epsilon=\zeta^2$ where $\zeta\in{\mu_K}$, then $\bar{\epsilon}\zeta^{-1}=\epsilon \zeta$, so $\overline{\varepsilon \zeta}=\pm \varepsilon \zeta$. Hence the mapping $\varepsilon\mapsto\bar{\varepsilon}/\varepsilon$ is an injection $$U_K/\mu_K U_{K_0}\hookrightarrow \mu_K/\mu_K^2,$$
so the index of $U_K$ modulo torsion over $U_{K_0}$ modulo torsion is bounded by 2. Now if $\varepsilon_1, ..., \varepsilon_r$ is a basis of $U_{K_0}$ modulo $K_0$-roots of unity, it will lift to a basis of a subgroup of index $1$ or $2$ of $U_K$ modulo $K$-roots of unity. In the determinant defining the regulator given in Definition \ref{reg field} all embeddings are real for $K_0$, hence $d_{i}=1$ for all $i\in\{1,...,r_0\}$, whereas all embeddings are complex for $K$, hence $d_{i}=2$ for all $i\in\{1,...,r_K\}$. Now as $r_0=r_K$ we get the result by applying Lemma 4.15 page 41 of \cite{Wash}.
\end{proof}

One finds the comparison of regulators in \cite{Wash} page 41, with the $(r_1+r_2-1)^2$-matrix to define the regulator (\textit{loc.cit.} same page). The second part of the proof here essentially follows his strategy.

\begin{rem}\label{disc CM}
Proposition \ref{CM} shows that the regulator of a CM field is bounded in terms of its maximal totally real subfield, hence one cannot expect to obtain $R_K\gg \log\vert D_K\vert$ in that case, because $\vert D_K\vert$ may not be bounded. Take for instance a family of quadratic imaginary extensions of $\mathbb{Q}$, then $\vert D_K\vert$ tends to infinity but $R_K$ is bounded.
\end{rem}

We thus obtain the following proof of Theorem \ref{reg fini intro}. In a nutshell: the regulator verifies a Northcott property on the collection of all non-CM fields.

\begin{proof}
Let us take a family of non-CM number fields $K$ with bounded regulator. Then using Theorem \ref{Her} one has that the degree $d$ is bounded. Moreover using Theorem \ref{reg disc} and the fact that $r_K-r_0\neq 0$ by Proposition \ref{CM}, then the discriminant is also bounded because we have just seen that  the degree $d$ is bounded. This gives finiteness by Hermite's Theorem \ref{HM}. 
\end{proof}

If one needs an asymptotic result concerning the discriminant and the regulator of number fields, let us also quote the classical Brauer-Siegel theorem.

\begin{thm} (Brauer-Siegel)
Consider a family of number fields $K$ of bounded degree and let $h_K$ denote the class number of $K$. Then as $\vert D_K\vert$ tends to infinity, one has for any $\varepsilon >0$ the inequalities $$\vert D_K\vert^{1/2-\varepsilon}\ll h_K R_K \ll \vert D_K\vert^{1/2+\varepsilon}.$$
\end{thm}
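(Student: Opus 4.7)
The plan is to derive both inequalities from the analytic class number formula
$$\lim_{s\to 1}(s-1)\zeta_K(s)=\frac{2^{r_1}(2\pi)^{r_2}}{w_K\sqrt{|D_K|}}\,h_K R_K,$$
which reduces the problem to two-sided control of the residue $\kappa_K:=\mathrm{Res}_{s=1}\zeta_K(s)$ of the Dedekind zeta function by $|D_K|^{\pm\varepsilon}$. Because the degree $d=r_1+2r_2$ stays bounded in the family, the archimedean factor $2^{r_1}(2\pi)^{r_2}$ is bounded above and below, and $w_K$ is bounded as well (the roots of unity lie in an extension of degree $\leq d$). Hence it is enough to show $|D_K|^{-\varepsilon}\ll\kappa_K\ll|D_K|^{\varepsilon}$.

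For the upper bound I would use Landau's elementary estimate, comparing $\zeta_K(s)$ with $\zeta(s)^d$ via the Euler product and invoking the convexity bound for $\zeta_K$ on the line $\Re s=1$. A contour / partial-summation argument, combined with the Minkowski bound $|D_K|^{1/2}\geq(\pi/4)^{r_2}d^{d}/d!$ from Theorem \ref{HM}, yields $\kappa_K\ll_d(\log|D_K|)^{d-1}$, which is much stronger than $|D_K|^{\varepsilon}$ for any fixed $\varepsilon>0$ and bounded $d$.

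The lower bound is where the whole difficulty sits; it is essentially Siegel's theorem. The strategy is to write the quotient $\zeta_K(s)/\zeta(s)$ as a Dirichlet series with non-negative coefficients --- this is immediate when $K/\mathbb{Q}$ is abelian, and in general follows from the Aramata--Brauer theorem, which factors $\zeta_K(s)/\zeta(s)$ as a product of Artin $L$-functions raised to non-negative exponents. One then bounds the product of $L(1,\chi)$ from below by $|D_K|^{-\varepsilon}$, in two alternatives: if no real zero of any of the relevant $L$-functions lies too close to $s=1$, a direct Hadamard-type estimate works and gives an effective bound; otherwise, a hypothetical Siegel zero $\beta$ of some $L(s,\chi_0)$ is exploited by considering the auxiliary product $L(s,\chi_0)L(s,\chi_0\chi)$ to prove that $L(1,\chi)\gg|D_K|^{-\varepsilon}$ for every other character $\chi$, at the price of non-effective constants.

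The main obstacle is precisely this second alternative: the possible existence of a Siegel zero forces the implicit constant in $|D_K|^{1/2-\varepsilon}\ll h_KR_K$ to be ineffective, a defect that no proof has been able to remove. The class-number-formula reduction and the upper bound are routine; the whole arithmetic depth of the statement lies in excluding Siegel zeros asymptotically, which is a classical but delicate argument.
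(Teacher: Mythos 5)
The paper itself gives no proof of this statement: it is quoted as the classical Brauer--Siegel theorem, purely for context, so your sketch can only be measured against the standard classical argument. Its architecture is indeed that argument: the analytic class number formula reduces the claim to two-sided bounds $|D_K|^{-\varepsilon}\ll\kappa_K\ll|D_K|^{\varepsilon}$ for the residue, bounded degree makes $2^{r_1}(2\pi)^{r_2}$ and $w_K$ harmless (since $\varphi(w_K)\le d$), the upper bound $\kappa_K\ll_d(\log|D_K|)^{d-1}$ is routine, and the whole depth sits in Siegel's ineffective lower bound. Two of your specific claims, however, would fail as written.

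First, $\zeta_K(s)/\zeta(s)$ is in general \emph{not} a Dirichlet series with non-negative coefficients, not even in the abelian case: for $K=\mathbb{Q}(i)$ it equals $L(s,\chi_{-4})$, whose coefficients alternate in sign. What Aramata--Brauer provides is holomorphy of $\zeta_K/\zeta$ (non-negative \emph{exponents} in the Artin factorization), which is weaker than, and different from, coefficient positivity; the positivity actually exploited in Siegel's and Brauer's proofs comes from full Dedekind zeta functions --- of $K$ itself, or of composita $KK_0$ with an auxiliary ``exceptional'' field in the Siegel-zero branch --- whose coefficients count ideals and are therefore non-negative. Second, Aramata--Brauer applies to Galois extensions, so for a non-normal $K$ in your family the holomorphy of $\zeta_K/\zeta$ is precisely Dedekind's conjecture and is not known; ``in general follows from Aramata--Brauer'' is not correct. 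The classical repair under the bounded-degree hypothesis is to pass to the Galois closure $N$, whose degree is at most $d!$ and whose discriminant satisfies $\log|D_N|\ll_d\log|D_K|$, establish the lower bound there, and transfer back to $K$ as in Brauer's original treatment (here one may use that $\zeta_N/\zeta_K$ \emph{is} entire, since $N/K$ is Galois). With these two corrections your outline becomes the standard proof, and your closing remark is accurate: the constant in $|D_K|^{1/2-\varepsilon}\ll h_KR_K$ is ineffective because a Siegel zero cannot be excluded.
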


If the reader is interested in a comparison between this theorem and the behavior of some invariants of abelian varieties, a detailed study is accessible in \cite{Hin}.

\section{Elliptic curves}

We give in this section a lower bound of the differential height by the norm of the product of the bad reduction primes in the semi-stable case, then we obtain the result in the general case, hence deriving a proof of Theorem \ref{height conductor injectivity}. This implies an upper bound on the Mordell-Weil rank of elliptic curves over number fields in terms of the differential height. Using this and the Lang-Silverman conjecture, we obtain a proof of Theorem \ref{reg height thm}, which implies Theorem \ref{reg height thm intro}.

\subsection{Bad reduction primes}

Our aim in this paragraph is to compare the height of $E$ and the norm of the product of the bad reduction primes of $E$ over the base field $K$. We will first prove the inequality in the semi-stable case and then derive it in general using some base change properties in the next paragraph. The following proposition gives the result in the semi-stable case. 

\begin{prop}\label{semi stable height conductor}
Let $E/K$ be a semi-stable elliptic curve defined over a number field $K$ of degree $d$. Then  $$\hFplus(E/K)\geq \frac{1}{12d}\log N^0_{E/K}.$$
\end{prop}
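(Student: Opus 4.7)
The plan is to apply Silverman's formula from Theorem~\ref{elliptique}, which reads
$$\hFplus(E/K) = \frac{1}{12d}\bigg[\log N_{K/\mathbb{Q}}(\Delta_E) - \sum_{v \in M_K^\infty} d_v \log\!\big(|\Delta(\tau_v)|(2\Ima\tau_v)^6\big)\bigg],$$
and bound its two contributions separately. At the finite places the semi-stability hypothesis is exactly what is needed: the minimal discriminant ideal factors as $\Delta_E = \prod_\mathfrak{p} \mathfrak{p}^{e_\mathfrak{p}}$ with $e_\mathfrak{p} \geq 1$ precisely at the primes of bad reduction of $E/K$, so
$$\log N_{K/\mathbb{Q}}(\Delta_E) \;\geq\; \sum_{\mathfrak{p}\text{ bad}} \log N_{K/\mathbb{Q}}(\mathfrak{p}) \;=\; \log N^0_{E/K}.$$

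The main work lies in showing that the archimedean sum is non-positive, i.e.\ that $|\Delta(\tau_v)|(2\Ima\tau_v)^6 \leq 1$ at every place $v\in M_K^\infty$. The expression $|\Delta(\tau)|(\Ima\tau)^6$ is $\mathrm{SL}_2(\mathbb{Z})$-invariant (as already recalled just after Theorem~\ref{elliptique}, since $\Delta$ has weight $12$), so I may assume each $\tau_v$ lies in the standard fundamental domain, where $y := \Ima\tau_v \geq \sqrt{3}/2$. Setting $|q| = e^{-2\pi y}$ and applying the triangle inequality to the product expansion of $\Delta$ gives
$$|\Delta(\tau)|(2y)^6 \;\leq\; 64\, y^6\, e^{-2\pi y}\, \exp\!\bigg(\frac{24|q|}{1-|q|}\bigg).$$
A one-variable calculation shows that $y \mapsto 64 y^6 e^{-2\pi y}$ is maximized on $[\sqrt{3}/2,\infty)$ at $y = 3/\pi$ with value $64(3/\pi)^6 e^{-6} \approx 0.12$, while the Euler factor remains very close to $1$ since $|q| \leq e^{-\pi\sqrt{3}} < 5 \cdot 10^{-3}$. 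The product stays safely below $1$.

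Inserting both inequalities into Silverman's formula immediately yields the claim. The only step requiring genuine verification is the archimedean one, but it is essentially a standard numerical check on the $q$-expansion of the modular discriminant restricted to the fundamental domain, so no substantial difficulty is expected.
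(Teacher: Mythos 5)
Your proof is correct and follows essentially the same route as the paper: apply Silverman's formula from Theorem~\ref{elliptique}, note that the bad primes divide the minimal discriminant to get the finite-place bound, and check that $\vert\Delta(\tau_v)\vert(2\Ima\tau_v)^6\leq 1$ on the fundamental domain via the $q$-expansion with $\Ima\tau_v\geq\sqrt{3}/2$ (your multiplicative estimate $64y^6e^{-2\pi y}\cdot\exp(24|q|/(1-|q|))<1$ is the same computation the paper does additively in logarithms). The only slight imprecision is the remark that semi-stability is ``exactly what is needed'' at the finite places: the positivity of the minimal discriminant's valuation at bad primes holds for any elliptic curve, and semi-stability is really the hypothesis under which Theorem~\ref{elliptique} is invoked, but since it is assumed throughout this does not affect the argument.
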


\begin{proof}
 One has the exact formula from Theorem \ref{elliptique} 
\[
\hFplus(E/K)=\frac{1}{12d}\left[\log N_{K/\mathbb{Q}}(\Delta_E)-\sum_{v\in{M_K^{\infty}}}d_v \log\Big(\vert \Delta(\tau_v) \vert (2\Ima\tau_v)^6\Big)\right],
\]
where $\Delta_E$ is the minimal discriminant of the curve, one may choose $\tau_v$ a period in the fundamental domain such that $E(\bar{K}_v)\simeq\mathbb{C}/\mathbb{Z}+\tau_v\mathbb{Z}$ and $\Delta(\tau_v)=q\prod_{n=1}^{+\infty}(1-q^n)^{24}$ is the modular discriminant, with $q=\exp(2\pi i\tau_v)$. A direct analytic estimate using $\Ima\tau_v\geq \sqrt{3}/2$ provides us with

\[
\sum_{n=1}^{+\infty}\log\vert1-e^{2i\pi\tau_v n}\vert\leq \sum_{n=1}^{+\infty}\log (1+e^{-\sqrt{3}\pi n})\leq 0.005
\]
hence
\[
-2\pi\Ima\tau_v +24\sum_{n=1}^{+\infty}\log\vert1-e^{2i\pi\tau_v n}\vert+6\log(2\Ima\tau_v)\leq 0,
\]
hence $-\log(\vert \Delta(\tau_v)\vert (2\Ima\tau_v)^6)\geq 0$ and

\begin{equation}\label{elliptic}
\hFplus(E/K)\geq \frac{1}{12d}\log N_{K/\mathbb{Q}}(\Delta_E) \geq \frac{1}{12d}\log N^0_{E/K},
\end{equation}
because the bad reduction primes divide the discriminant.
\end{proof}

\subsection{Reducing to the semi-stable case}

We explain in this section how to use base change properties to jump from the semi-stable case to the general case. We will use the following definition.

\begin{defin}\label{chai cond}
Let $E$ be an elliptic curve defined over a discrete valuation field $K_\mathfrak{p}$ and let $K_{\mathfrak{p}'}'$ be a finite extension of $K_\mathfrak{p}$ where $E$ has semi-stable reduction, with ramification index $e_{K_{\mathfrak{p}'}'/K_\mathfrak{p}}$, where $\mathfrak{p}'$ is a prime above $\mathfrak{p}$, and $\omega_{E/K_{\mathfrak{p}}}$ the sheaf of differentials.
Let $$c(E)=\frac{1}{e_{K_{\mathfrak{p}'}'/K_{\mathfrak{p}}}}\mathrm{length}_{\mathcal{O}_{K_{\mathfrak{p}'}'}}\frac{\Gamma(\Spec(\mathcal{O}_{K_{\mathfrak{p}}}),\omega_{E/K_{\mathfrak{p}}})\otimes \mathcal{O}_{K_{\mathfrak{p}'}'}}{\Gamma(\Spec(\mathcal{O}_{K_{\mathfrak{p}'}'}), \omega_{E/K_{\mathfrak{p}'}'})},$$
 where $\Gamma(.,.)$ stands for global sections. For any prime $\mathfrak{p}$ in $\mathcal{O}_K$, we define the local base change conductor at $\mathfrak{p}$ by setting $c(E,\mathfrak{p})=c(E_{K_\mathfrak{p}})$.
\end{defin}

This conductor was defined by Chai in \cite{Cha}. It verifies the two following key properties.
\begin{prop}
Let $E$ be an elliptic curve defined over a discrete valuation field $K_\mathfrak{p}$ and let $K_{\mathfrak{p}'}'$ be a finite extension of $K_\mathfrak{p}$ where $E$ has semi-stable reduction with base change conductor $c(E,{\mathfrak{p}})$. Then
\begin{enumerate}
\item one has $c(E,\mathfrak{p})=0$ if and only if $E/K_{\mathfrak{p}}$ has semi-stable reduction,
\item if $c(E,\mathfrak{p})\neq 0$, then $c(E,\mathfrak{p})\geq 1/[K_{\mathfrak{p}'}':K_{\mathfrak{p}}]$.
\end{enumerate}
\end{prop}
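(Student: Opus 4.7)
The plan is to handle the two items separately, treating (2) as a straightforward length argument and (1) as the content of Chai's interpretation of $c(E,\mathfrak{p})$ as the obstruction to the N\'eron model commuting with base change.

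For item (2), I first observe that the cokernel
$$Q=\Gamma(\Spec\mathcal{O}_{K_\mathfrak{p}},\omega_{E/K_\mathfrak{p}})\otimes_{\mathcal{O}_{K_\mathfrak{p}}}\mathcal{O}_{K_{\mathfrak{p}'}'}\big/\Gamma(\Spec\mathcal{O}_{K_{\mathfrak{p}'}'},\omega_{E/K_{\mathfrak{p}'}'})$$
appearing in the definition of $c(E)$ is a finitely generated torsion $\mathcal{O}_{K_{\mathfrak{p}'}'}$-module. Both source and target of the natural map are locally free of rank one, and this map is an isomorphism after inverting the uniformizer (both recover $H^{0}(E_{K_{\mathfrak{p}'}'},\Omega^{1})$). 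Hence $\mathrm{length}_{\mathcal{O}_{K_{\mathfrak{p}'}'}}(Q)\in\mathbb{Z}_{\geq 0}$, and whenever it is nonzero it is at least $1$; dividing by $e_{K_{\mathfrak{p}'}'/K_\mathfrak{p}}\leq[K_{\mathfrak{p}'}':K_\mathfrak{p}]$ yields the claimed inequality.

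For item (1), the $(\Leftarrow)$ direction is essentially formal: if $E/K_\mathfrak{p}$ is already semi-stable, its N\'eron model $\mathcal{E}$ is semi-abelian, and the formation of N\'eron models of semi-abelian varieties commutes with arbitrary discrete-valuation base change (Bosch--L\"utkebohmert--Raynaud, \emph{N\'eron Models}, Chapter 7). Therefore $\mathcal{E}\times_{\mathcal{O}_{K_\mathfrak{p}}}\mathcal{O}_{K_{\mathfrak{p}'}'}$ is the N\'eron model of $E/K_{\mathfrak{p}'}'$; taking global sections of the line bundle $\omega$ commutes with this flat base change, so the natural map is an isomorphism, $Q=0$, and $c(E,\mathfrak{p})=0$.

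The $(\Rightarrow)$ direction is the substantive part and will be the main obstacle. Assuming $c(E,\mathfrak{p})=0$, dualizing the resulting isomorphism of sections yields $\Lie(\mathcal{E})\otimes_{\mathcal{O}_{K_\mathfrak{p}}}\mathcal{O}_{K_{\mathfrak{p}'}'}\xrightarrow{\sim}\Lie(\mathcal{E}')$, where $\mathcal{E}'$ denotes the N\'eron model of $E/K_{\mathfrak{p}'}'$. Composed with the canonical morphism $\mathcal{E}\times_{\mathcal{O}_{K_\mathfrak{p}}}\mathcal{O}_{K_{\mathfrak{p}'}'}\to\mathcal{E}'$ coming from the N\'eron mapping property, this forces the induced map on identity components to be \'etale and bijective on tangent spaces, hence an isomorphism of smooth group schemes. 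In particular the special fiber of $\mathcal{E}^{0}\times\mathcal{O}_{K_{\mathfrak{p}'}'}$ is semi-abelian, and by faithfully flat descent so is the special fiber of $\mathcal{E}^{0}$, exhibiting $E/K_\mathfrak{p}$ as semi-stable. In a polished writeup I would simply invoke Chai's theorem from \cite{Cha} identifying $c(E,\mathfrak{p})$ with the precise obstruction to N\'eron-model compatibility under base change, which collapses both directions of (1) into a single citation.
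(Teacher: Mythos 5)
Your item (2) and the $(\Leftarrow)$ half of item (1) match the paper's argument in substance, but the $(\Rightarrow)$ half of item (1) — which you yourself identify as the substantive part — contains an invalid step. From $c(E,\mathfrak{p})=0$ you correctly get that the canonical morphism $\Phi\colon \mathcal{E}^0_{\mathcal{O}_{K_\mathfrak{p}}}\otimes\mathcal{O}_{K_{\mathfrak{p}'}'}\to\mathcal{E}^0_{\mathcal{O}_{K_{\mathfrak{p}'}'}}$ induces an isomorphism on Lie algebras, hence is unramified and indeed \'etale. But ``\'etale and bijective on tangent spaces, hence an isomorphism of smooth group schemes'' is not a valid inference: any \'etale isogeny (multiplication by an integer invertible on the base, or the quotient by a finite \'etale subgroup scheme) is \'etale and induces an isomorphism on Lie algebras without being an isomorphism. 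The missing ingredient, which you never invoke in this direction, is that $\Phi$ restricts to the identity on the generic fibres, i.e. is birational. This is exactly how the paper closes the argument: $\Phi$ is birational and quasi-finite (finite kernel plus equal dimensions on the special fibre, whence surjectivity there), the target is normal, so Zariski's Main Theorem makes $\Phi$ an open immersion, and an open immersion that is surjective is an isomorphism; only then does one transport semi-abelianness of the special fibre back down. Your fallback of ``simply invoking Chai's theorem'' is a citation, not a proof, and is not the route the paper takes -- it proves the equivalence directly.

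Two smaller points. In the $(\Leftarrow)$ direction, the assertion that formation of the \emph{full} N\'eron model of a semi-abelian (semi-stable) abelian variety commutes with arbitrary discrete-valuation base change is false: for a Tate curve a ramified extension multiplies the order of the component group, so only the identity components are compatible (this is SGA 7 I, Cor.\ 3.3, which is what the paper cites; the BLR statement you have in mind is likewise about $\mathcal{E}^0$). Your conclusion survives because $\omega_{E/K_\mathfrak{p}}=\varepsilon^{\star}\Omega$ depends only on the identity component, but that reduction should be stated. Your proof of item (2) coincides with the paper's: a nonzero length is at least $1$, and $e_{K_{\mathfrak{p}'}'/K_\mathfrak{p}}\leq[K_{\mathfrak{p}'}':K_\mathfrak{p}]$.
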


\begin{proof}
To show the first point, let us start by assuming that $E/K_{\mathfrak{p}}$ has semi-stable reduction. Then if one denotes by $\mathcal{E}^0_{\mathcal{O}_{K_\mathfrak{p}}}$ the identity component of the N\'eron model of $E$ over $K_{\mathfrak{p}}$ one has $\mathcal{E}^0_{\mathcal{O}_{K_\mathfrak{p}}}\otimes\mathcal{O}_{K_{\mathfrak{p}'}'}\simeq \mathcal{E}^0_{\mathcal{O}_{K_{\mathfrak{p}'}'}}$ by Corollaire 3.3 page 348 of SGA 7.1 \cite{SGA}, hence the differentials are the same and $c(E, {\mathfrak{p}})=0$.

Reciprocally, one still has a map $\Phi: \mathcal{E}^0_{\mathcal{O}_{K_\mathfrak{p}}}\otimes\mathcal{O}_{K_{\mathfrak{p}'}'} \to\mathcal{E}^0_{\mathcal{O}_{K_{\mathfrak{p}'}'}}$ by the N\'eron property. As $c(E, {\mathfrak{p}})=0$, the Lie algebras are the same and as $\Phi$ is an isomorphism on the generic fibers, $\Phi$ is birational. On the special fiber, $\Phi$ has finite kernel and is thus surjective because the dimensions are equal, here again because $c(E, {\mathfrak{p}})=0$.

Overall, $\Phi$ is quasi-finite and birational. As $\mathcal{E}_{\mathcal{O}_{K_\mathfrak{p}'}'}$ is normal, by Zariski's Main Theorem found in Corollary 4.6 page 152 of \cite{Liu}, $\Phi$ is an open immersion. Hence $\Phi$ is surjective and an open immersion, hence an isomorphism. This implies that $E/K_{\mathfrak{p}}$ is semi-abelian. 

For the second point, as the length is non zero it must be at least one and we may upper bound $e_{K_{\mathfrak{p}'}'/K_{\mathfrak{p}}}\leq [K_{\mathfrak{p}'}' :K_{\mathfrak{p}}]$.
\end{proof}

Let $Uns$ denote the set of unstable primes of $E$ over $K$. Let $K'$ be a number field extension of $K$ over which $E$ has semi-stable reduction everywhere. One then has
\begin{equation}\label{base change conductor}
\hFplus(E/K')-\hFplus(E/K)=-\frac{1}{[K':K]}\sum_{\mathfrak{p}\in{Uns}} c(E,\mathfrak{p})\log \mathcal{N}(\mathfrak{p}).
\end{equation}

\begin{prop}\label{height conductor}
Let $K$ a number field of degree $d$. For any elliptic curve (not necessarily semi-stable) $E$ defined over $K$ one has $$\hFplus(E/K) \geq \frac{1}{12^8 d}\log N^{0}_{E/K}.$$
\end{prop}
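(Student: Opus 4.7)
The plan is to reduce to the semi-stable case of Proposition \ref{semi stable height conductor} by base change. First I would choose a finite extension $K'/K$ of degree $N=[K':K]$ over which $E$ acquires semi-stable reduction at every prime; for example $K'=K(E[m])$ for a suitable $m\geq 3$ works by Grothendieck's semi-stable reduction theorem, and the explicit bound on $N$ must be tracked carefully through the residue characteristics $2$ and $3$, where wild inertia makes the local extension larger than the Kodaira--N\'eron classification alone would suggest. Applying Proposition \ref{semi stable height conductor} to the semi-stable curve $E/K'$ gives
\[
\hFplus(E/K')\;\geq\;\frac{1}{12[K':\mathbb{Q}]}\log N^0_{E/K'}\;=\;\frac{1}{12Nd}\log N^0_{E/K'}.
\]

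Two ingredients then feed back into $\hFplus(E/K)$. The base change formula (\ref{base change conductor}) combined with the lower bound $c(E,\mathfrak p)\geq 1/[K'_{\mathfrak p'}:K_{\mathfrak p}]\geq 1/N$ from point (2) of the proposition just above gives
\[
\hFplus(E/K)-\hFplus(E/K')\;=\;\frac{1}{N}\sum_{\mathfrak p\in Uns}c(E,\mathfrak p)\log\mathcal N(\mathfrak p)\;\geq\;\frac{1}{N^{2}}\log N^0_{E/K,\mathrm{unst}},
\]
where $N^0_{E/K,\mathrm{unst}}$ denotes the product of norms of the unstable bad primes of $E/K$. On the other hand, every stable bad prime $\mathfrak p$ of $E/K$ has at least one prime $\mathfrak p'$ of $K'$ above it at which $E/K'$ still has (necessarily semi-stable) bad reduction, and $\mathcal N(\mathfrak p')\geq \mathcal N(\mathfrak p)$, so $\log N^0_{E/K'}\geq \log N^0_{E/K,\mathrm{stab}}$. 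Summing the stable and unstable contributions and taking the smaller of the two coefficients $\tfrac{1}{12Nd}$ and $\tfrac{1}{N^{2}}$ yields
\[
\hFplus(E/K)\;\geq\;\frac{1}{12N^{2}d}\log N^0_{E/K},
\]
which is an inequality of the required shape.

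The hard part will be producing a sufficiently small, explicit universal bound on $N$. Away from residue characteristics $2$ and $3$ the Kodaira--N\'eron classification gives $N\leq 12$ comfortably; the difficulty in the wild characteristics is that the minimal extension realizing semi-stability can be significantly larger, and a careful local analysis of the action of inertia on a Tate module $T_\ell E$ (or equivalently, of the torsion extensions $K(E[m])$ for small $m$) is what governs the final numerical constant $12^{8}$ appearing in the statement.
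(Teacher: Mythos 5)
Your skeleton is the same as the paper's: split the bad primes into semi-stable and unstable ones, control the unstable contribution through Chai's base change conductor via formula (\ref{base change conductor}) together with $c(E,\mathfrak p)\geq 1/[K':K]$, and treat the semi-stable part over $K'$ with Proposition \ref{semi stable height conductor}, noting that semi-stable bad primes stay bad upstairs. But as written the argument does not deliver the stated inequality, for two concrete reasons. First, the entire explicit content of the proposition is the universal bound on $N=[K':K]$, which you defer as ``the hard part''; no local analysis of wild inertia at $2$ and $3$ is actually involved. The paper takes $K'=K(E[12])$, which is semi-stable everywhere by Silverberg--Zarhin (Theorem 6.2 of \cite{SiZa}; see also \cite{Ser}), and then uses the purely group-theoretic bound $[K':K]\leq \#\mathrm{GL}_2(\mathbb{Z}/12\mathbb{Z})=4608<12^4$ (cited from Lemme 4.7 of \cite{GaRe2}). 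Your expectation that the constant is governed by a prime-by-prime Kodaira--N\'eron analysis is misplaced, because a single global extension must simultaneously semi-stabilize all unstable primes; without producing this bound on $N$ the proposition's explicit constant is not proved.

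Second, even granting $N\leq 12^4$, your final aggregation loses a factor of $12$: replacing the minimum of the two coefficients $\frac{1}{12Nd}$ and $\frac{1}{N^2}$ by $\frac{1}{12N^2d}$ gives only $\hFplus(E/K)\geq \frac{1}{12^{9}d}\log N^0_{E/K}$, weaker than the claimed $\frac{1}{12^{8}d}\log N^0_{E/K}$. The repair is to keep the two terms separate, as the paper does: the semi-stable term has coefficient $\frac{1}{12[K':\mathbb{Q}]}\geq\frac{1}{12^5 d}\geq\frac{1}{12^8 d}$ and the unstable term has coefficient $\frac{1}{[K':K]^2}\geq\frac{1}{12^8}\geq\frac{1}{12^8 d}$, so each term is separately at least $\frac{1}{12^8 d}$ times the corresponding logarithm, and adding them gives exactly the stated bound. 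With these two points supplied, your proof coincides with the paper's.
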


\begin{proof}
 Let $N^{st}_{E/K}$ be the product of the norms of primes with semi-stable bad reduction. Let $N^{uns}_{E/K}$ be the product of the norms of primes with unstable bad reduction. By definition one has $N^0_{E/K}=N^{st}_{E/K}N^{uns}_{E/K}$. Let $K'$ be a number field extension of $K$ such that $E$ acquires semi-stable reduction everywhere over $K'$. Using equality (\ref{base change conductor}), one gets $$\hFplus(E/K)\geq \hFplus(E/K') +\frac{1}{[K':K]^2}\log N^{uns}_{E/K}.$$ As $E/K'$ has semi-stable reduction everywhere, one gets by Proposition \ref{semi stable height conductor} that $$\hFplus(E/K')\geq \frac{1}{12d'} \log N^{st}_{E/K'}.$$ Recall (use Theorem 6.2 page 413 of \cite{SiZa} or \cite{Ser} page 294) that one may choose $K'=K[E[12]]$, hence the degree $d'=[K':\mathbb{Q}]$ is controlled by the degree $d=[K:\mathbb{Q}]$ in the following way: use Lemme 4.7 of \cite{GaRe2} to get $[K':K]\leq 12^4$, hence $d'\leq 12^4 d$. Now by definition $N^{st}_{E/K'}={N^{st}_{E/K}}^{[K':K]}$ and so putting all together it gives $$\hFplus(E/K)\geq \frac{1}{12^5 d}\log N^{st}_{E/K}+ \frac{1}{12^8}\log N^{uns}_{E/K}\geq \frac{1}{12^8d}\log N^{0}_{E/K}.$$
\end{proof}

We obtain a proof of Theorem \ref{height conductor injectivity} as a consequence of Proposition \ref{height conductor} in the following way.

\begin{proof}
Apply Proposition \ref{height conductor} to get 
\begin{equation}\label{un}
\hFplus(E/K) \geq \frac{1}{12^8 d} \log N^{0}_{E/K}
\end{equation}
and apply the Matrix Lemma of Autissier's \cite{Aut} in his simplified version to the polarized elliptic curve $(E,L)$. For any $\varepsilon\in{]0,1[}$ Autissier gives (note that $2\pi^2$ disappeared with the non-negative normalization, and that $g=1$):
$$\hFplus(E/K)+\frac{1}{2}\log\frac{1}{\varepsilon}\geq\frac{(1-\varepsilon)\pi}{6d}\sum_{v\in{M_K^{\infty}}}d_v\rho(E_v,L_v)^{-2}.$$ Choose $\varepsilon =1-\frac{3}{\pi}$ to get 
\begin{equation}\label{deux}
2\hFplus(E/K)+\log\frac{\pi}{\pi-3}\geq \frac{1}{d}\sum_{v\in{M_K^{\infty}}}d_v\rho(E_v,L_v)^{-2}.
\end{equation}

Then sum the two inequalities (\ref{un}) and (\ref{deux}).
\end{proof}

\subsection{Regulator}
We start this paragraph by recalling the original conjecture of Lang-Silverman in dimension 1 (Conjecture 9.9 of \cite{Sil} page 233, see also \cite{Sil3} page 396 for a generalized version by Silverman for abelian varieties, originally the conjecture was a question of Lang about elliptic curves).

\begin{conj}({Lang-Silverman})\label{LangSilV1}
For any number field $K$, there exists a positive constant $c_{4}=c_{4}(K)$ such that for any elliptic curve $E/K$, for any point $P\in{A(K)}$, if $\mathbb{Z}\!\cdot\! P$ is Zariski dense one has
\[
\widehat{h}(P) \geq c_{4}\, \max\Big\{\hFplus(E/K),\,1\Big\}\;,
\]
where $\hat{h}(.)=\widehat{h}_{E,L}(.)$ is the N\'eron-Tate height associated to $L=3(O)$ and $\hFplus(E/K)$ is the (relative) differential height of the elliptic curve $E/K$.

\end{conj}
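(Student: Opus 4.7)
The statement is the Lang-Silverman conjecture, a famous open problem in Diophantine geometry; what I can sketch here is a plan of attack rather than a complete proof. The most developed route, worked out by Hindry and Silverman in \cite{HiSi3} and alluded to in the introduction, would derive the conjecture from the ABC conjecture over the base number field $K$.

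The plan is to decompose the N\'eron-Tate height into a sum of local contributions,
$$
\hat{h}(P) \;=\; \frac{1}{[K:\mathbb{Q}]}\sum_{v\in M_K} d_v\,\lambda_v(P),
$$
and to relate each N\'eron local height $\lambda_v$ to the corresponding local term of $\hFplus(E/K)$ that appears in the formula of Theorem \ref{elliptique}. At archimedean places one would parametrize $E_v$ as $\mathbb{C}/(\mathbb{Z}+\tau_v\mathbb{Z})$ and compute $\lambda_v$ via the Weierstrass $\sigma$-function; at non-archimedean places of multiplicative reduction one would use Tate's uniformization $E(\bar{K}_v)\simeq \bar{K}_v^{\times}/q_v^{\mathbb{Z}}$ with $v(q_v)=-v(j_E)$, so that $\lambda_v(P)$ reduces, up to a bounded term, to $\tfrac{1}{2}B_2(\{v(u_P)/v(q_v)\})\,v(q_v)$, where $u_P$ is a lift of $P$ and $B_2$ is the second Bernoulli polynomial.

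Combining these local pieces with the formula in Theorem \ref{elliptique} one would obtain, schematically, an inequality of the form
$$
\hat{h}(P) \;\geq\; \frac{c}{d}\,\Psi(P)\,\hFplus(E/K) \;-\; (\text{archimedean boundary terms}),
$$
where $\Psi(P)$ measures how far $P$ sits from the origin in each Tate uniformization, weighted by $d_v$. The task would then reduce to showing that $\Psi(P)$ admits an absolute positive lower bound whenever $\mathbb{Z}\!\cdot\! P$ is Zariski dense (equivalently, whenever $P$ is non-torsion, since $E$ is simple) and $\hFplus(E/K)$ is sufficiently large.

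The hard part, and precisely the reason the conjecture is still a conjecture, is this last step. A putative counterexample would produce a non-torsion point whose fractional parts $\{v(u_P)/v(q_v)\}$ are anomalously close to $0$ at many places of multiplicative reduction simultaneously; after clearing denominators this yields an $S$-unit relation whose radical is much smaller than the height of its terms, contradicting the ABC conjecture over $K$. Removing the ABC input, or replacing it by an equivalent effective Diophantine estimate, is the genuine obstacle. Partial unconditional results are known for CM curves, for curves whose $j$-invariant is divisible by few primes, and under explicit semi-stability and conductor hypotheses via the Mordell-Weil sieve, but no unconditional proof of the full conjecture is currently available.
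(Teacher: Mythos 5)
You are right to treat this statement as what it is: Conjecture \ref{LangSilV1} is an open conjecture which the paper does not prove, only assumes as a hypothesis in Theorem \ref{reg height thm} and comments on in Remark \ref{ABC}, where it cites Hindry--Silverman \cite{HiSi3} for the facts that ABC implies it and that it holds unconditionally for curves with bounded Szpiro quotient. Your sketch of the local N\'eron height decomposition and the ABC-based argument is consistent with that reference, and since the paper contains no proof of the statement there is nothing further to compare.
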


As $E$ is simple, the Zariski density of $\mathbb{Z}\!\cdot\! P$ is equivalent to $P$ being a non-torsion point.

\begin{rem}\label{ABC}
In the article \cite{HiSi3}, Hindry and Silverman show that the conjecture is true for a large class of elliptic curves (namely, with bounded Szpiro quotient). They furthermore show that the ABC conjecture implies the Lang-Silverman conjecture for elliptic curves.
\end{rem}

Firstly let us show (unconditionally) that the Mordell-Weil rank $m_K$ is bounded by the height $\hFplus(E/K)$. 

\begin{lem}\label{rank height}
Let $E$ be an elliptic curve defined over a number field $K$. Let $m_K$ be the rank of $E(K)$. There exists a constant $c_{5}=c_{5}(K)>0$ such that $$m_K\leq c_{5} \max\{1,\hFplus(E/K)\},$$
and one may take $c_{5}=(2^{26} 3^8+2^8\log16)d^3+2^8d\log \vert \Delta_{K/\mathbb{Q}}\vert$.
\end{lem}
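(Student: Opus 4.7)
The plan is to combine a 2-descent upper bound on the Mordell--Weil rank with the height lower bound of Proposition~\ref{height conductor}, so that $\log N^{0}_{E/K}$ is transferred into a multiple of $\hFplus(E/K)$. The first step is to pass to the extension $K'=K(E[2])$ where the 2-torsion becomes rational. Since the mod-$2$ Galois representation of $E$ factors through $GL_{2}(\mathbb{F}_{2})\simeq S_{3}$, one has $[K':K]\leq 6$ and $d':=[K':\mathbb{Q}]\leq 6d$. Because $E(K)\hookrightarrow E(K')$, the rank inequality $m_{K}\leq m_{K'}$ reduces the task to bounding $m_{K'}$.

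Over $K'$, full 2-descent yields the classical bound $m_{K'}\leq 2\dim_{\mathbb{F}_{2}}K'(S',2)$, where $S'$ consists of the places of $K'$ above $2$ together with places of bad reduction of $E$, and $K'(S',2)=\{\alpha\in (K')^{\times}/(K')^{\times 2}\colon \ordv(\alpha)\equiv 0\pmod{2}\ \forall v\notin S'\}$. The $S'$-unit theorem combined with the exact sequence relating principal classes and the $S'$-class group gives
$$\dim_{\mathbb{F}_{2}}K'(S',2)\leq r_{1}(K')+r_{2}(K')+\#S'_{\mathrm{fin}}+\dim_{\mathbb{F}_{2}}\mathrm{Cl}(\mathcal{O}_{K',S'})[2]+1\leq d'+\#S'_{\mathrm{fin}}+\log_{2} h_{K'}+1.$$
Each term is then estimated: $\#S'_{\mathrm{fin}}\leq 6(d+\log N^{0}_{E/K}/\log 2)$ because each $K$-place has at most $6$ extensions to $K'$, there are at most $d$ places of $K$ above $2$, and every bad reduction prime has norm at least $2$.

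To handle $h_{K'}$, I would apply Minkowski's bound $\log h_{K'}\leq \tfrac{1}{2}\log|\Delta_{K'/\mathbb{Q}}|+O(d')$ together with the tower formula $\log|\Delta_{K'/\mathbb{Q}}|=\log N_{K/\mathbb{Q}}(\mathfrak{d}_{K'/K})+[K':K]\log|\Delta_{K/\mathbb{Q}}|$. Since $K(E[2])/K$ is unramified outside primes of bad reduction and primes above $2$, with tame different exponent $\leq [K':K]-1\leq 5$ at the former and wild contribution at the latter controlled by the higher ramification groups, one obtains an estimate of the shape
$$\log|\Delta_{K'/\mathbb{Q}}|\leq 6\log|\Delta_{K/\mathbb{Q}}|+C_{1}d+C_{2}\log N^{0}_{E/K}$$
for explicit absolute constants $C_{1},C_{2}$.

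Assembling everything produces an inequality of the form $m_{K}\leq P(d)+Q(d)\log|\Delta_{K/\mathbb{Q}}|+R(d)\log N^{0}_{E/K}$ for explicit polynomials $P,Q,R$ in $d$. Proposition~\ref{height conductor} then substitutes $\log N^{0}_{E/K}\leq 12^{8}d\,\hFplus(E/K)$, and using $1\leq \max\{1,\hFplus(E/K)\}$ to absorb the first two terms gives the announced bound. The hard part is the bookkeeping of constants, especially the wild part of $\mathfrak{d}_{K'/K}$ above $2$ (via higher ramification groups) and its combination with the factor $12^{8}$ of Proposition~\ref{height conductor}, so as to arrive at the explicit values $2^{26}3^{8}=2^{10}\cdot 12^{8}$ and $2^{8}$ appearing in $c_{5}(K)$.
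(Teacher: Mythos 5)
Your proposal coincides with the paper's proof in its second half: both arguments pivot on $\log N^{0}_{E/K}$ and then convert it into $12^{8}d\,\max\{1,\hFplus(E/K)\}$ via Proposition \ref{height conductor}. The difference is in the first half. The paper performs no descent at all: it simply invokes R\'emond's explicit bound (Theorem 5.1 of \cite{Remond}), which gives $m_{K}\leq c_{6}(K)\log N^{0}_{E/K}+c_{7}(K)$ with $c_{6}=2^{12}d^{2}$ and $c_{7}=2^{8}d(\log|\Delta_{K/\mathbb{Q}}|+d^{2}\log 16)$, and the stated $c_{5}$ is literally $c_{6}c_{8}+c_{7}$ with $c_{8}=12^{8}d$. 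You instead re-prove a rank bound of this shape by full 2-descent over $K'=K(E[2])$; the skeleton (rank monotonicity, $[K':K]\leq 6$, the embedding of $E(K')/2E(K')$ into $K'(S',2)^{2}$, the $S'$-unit and class-group dimension count, and the control of $|\Delta_{K'/\mathbb{Q}}|$ through the relative discriminant of $K(E[2])/K$) is correct and makes the lemma more self-contained, at the price of redoing explicit analytic and ramification estimates that the citation provides for free; if carried out, your route would very likely yield a smaller constant than the paper's.

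The genuine gap is the explicit constant, and your closing sentence aims at the wrong target: the values $2^{26}3^{8}$ and $2^{8}$ are not something your descent bookkeeping should be engineered to reproduce --- they are inherited from R\'emond's constants multiplied by $c_{8}=12^{8}d$. What your proof must do instead is complete its own bookkeeping and check that the resulting constant is dominated by the stated $c_{5}$ (a smaller admissible constant implies the ``one may take'' clause). Two technical points need repair before that can be done. First, wild ramification in $K(E[2])/K$ can also occur at primes above $3$, since $3$ may divide $[K':K]$; the estimate survives (summing different exponents against local degrees still gives a term that is $O(d)$ with an absolute implied constant, alongside the tame contribution $\leq 5\log N^{0}_{E/K}$), but these primes must be included. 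Second, the bound $\log h_{K'}\leq\tfrac12\log|\Delta_{K'/\mathbb{Q}}|+O(d')$ is not a standard statement as written: Minkowski's bound plus ideal counting produces an extra factor of roughly $(\log|\Delta_{K'/\mathbb{Q}}|)^{d'-1}$, which you can absorb by weakening the coefficient $\tfrac12$ to $1$ (or by quoting an explicit class-number estimate), and since the lemma demands explicit constants, some explicit form of this inequality is required rather than an $O$-statement.
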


\begin{proof}
We will use as a pivot the quantity $N^{0}_{E/K}$ defined as the product of the norms of the bad reduction primes of the elliptic curve $E$ over $K$. Applying Theorem 5.1 of \cite{Remond} page 775, there exists constants $c_{6}=c_{6}(K)>0$ and $c_{7}=c_{7}(K)\geq0$ such that $m_{K}\leq c_{6}(K) \log N^0_{E/K} + c_{7}(K)$. The constants are given explicitly and depend on the degree and the discriminant of the base field here. We may take the slightly bigger explicit constants $c_6(K)=d^22^{12}$ and $c_7(K)=d2^8(\log\vert \Delta_{K/\mathbb{Q}}\vert+d^2 \log16)$. This last inequality doesn't require semi-stability of $E$.
Applying Proposition \ref{height conductor} of the present text one obtains $\log N^0_{E/K}\leq c_{8}(K) \max\{\hFplus(E/K),\,1\}$, also valid in general with $c_8(K)=12^8 d$. One concludes by $$m_{K}\leq c_{6}(K) \log N^0_{E/K} + c_{7}(K)\leq c_{6}(K)c_8(K) \max\{\hFplus(E/K),\,1\}  + c_{7}(K),$$ hence $$m_K\leq (c_{6}(K)c_8(K)+c_7(K)) \max\{\hFplus(E/K),\,1\},$$ and so $c_5=12^8 2^{12}d^3+ 2^8 d(\log\vert \Delta_{K/\mathbb{Q}}\vert+d^2\log16)=(2^{26} 3^8+2^8\log16)d^3+2^8d\log \vert \Delta_{K/\mathbb{Q}}\vert$.
\end{proof}

\begin{thm}\label{reg height thm}
Assume the Lang-Silverman conjecture \ref{LangSilV1}. Let $K$ be a number field. There exists a constant $c_{10}=c_{10}(K)>0$ such that for any elliptic curve $E$ defined over $K$,
$$ \Reg(E/K)\geq \Big(c_{10} \max\{\hFplus(E/K),\,1\}\Big)^{m_K/2}.$$
Thus the set of elliptic curves defined over a fixed number field $K$ such that $E(K)$ is Zariski dense and with bounded regulator is finite (up to isomorphisms).
\end{thm}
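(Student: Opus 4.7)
The plan is to combine Lang--Silverman (to get a lower bound on the first minimum of the Mordell--Weil lattice) with a Minkowski-type inequality (to pass from the first minimum to the covolume, hence to the regulator), then invoke Lemma \ref{rank height} and the Northcott property of the Faltings height for the finiteness assertion. First, equip $\Lambda:=E(K)/E(K)_{\mathrm{tors}}$ with the positive definite N\'eron--Tate pairing $\langle\cdot,\cdot\rangle$; this makes $\Lambda$ a Euclidean lattice of rank $m_K$ whose squared covolume equals $\Reg(E/K)$, since for any basis the Gram matrix $(\langle P_i,P_j\rangle)$ has determinant equal to the square of the covolume. Set $M:=\max\{\hFplus(E/K),1\}$. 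Since an elliptic curve is simple, $\mathbb{Z}\cdot P$ is Zariski dense iff $P$ is non-torsion, so Conjecture \ref{LangSilV1} gives a constant $c_4=c_4(K)$ such that $\hat h(P)\geq c_4 M$ for every non-torsion $P\in E(K)$. In particular the first successive minimum satisfies $\lambda_1(\Lambda)^2\geq c_4 M$.

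Next, by Hermite's inequality (equivalent to Minkowski's first theorem) one has $\lambda_1(\Lambda)\leq \gamma_{m_K}^{1/2}\Reg(E/K)^{1/(2m_K)}$, where $\gamma_n$ denotes the Hermite constant. Raising to the $2m_K$-th power and using $\lambda_1^2\geq c_4 M$ yields
$$\Reg(E/K)\geq \gamma_{m_K}^{-m_K}(c_4 M)^{m_K}.$$
To express this in the form $\Reg(E/K)\geq (c_{10}(K) M)^{m_K/2}$ with a constant depending only on $K$, one must absorb the $m_K$-dependent factor $\gamma_{m_K}^{-m_K}$; this is done using Lemma \ref{rank height}, which controls $m_K$ linearly in $M$, together with the polynomial growth $\gamma_n\ll n$. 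Taking square roots of the resulting inequality and invoking $M\geq 1$ explains the weaker exponent $m_K/2$ in the statement.

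For the finiteness assertion, Zariski density of $E(K)$ forces $m_K\geq 1$. Assuming $\Reg(E/K)\leq B$, the regulator lower bound gives $(c_{10}(K) M)^{m_K/2}\leq B$. Either $c_{10} M\leq 1$ (so $M$ is bounded directly), or $\log(c_{10} M)\leq (2/m_K)\log B\leq 2\log B$ using $m_K\geq 1$; in either case $\hFplus(E/K)$ is bounded in terms of $B$ and $K$. The Northcott property of the Faltings height then yields finitely many $\overline{\mathbb{Q}}$-isomorphism classes. The main obstacle is the interplay in the middle step between the $m_K$-dependence of the Hermite constant and the linear bound on $m_K$ from Lemma \ref{rank height}: this trade-off is precisely what forces the exponent $m_K/2$ (rather than the more natural $m_K$) and makes Lemma \ref{rank height} essential already in the regulator lower bound itself, not only in the finiteness deduction.
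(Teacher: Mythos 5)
Your overall strategy is the paper's: Lang--Silverman bounds the minima of the Mordell--Weil lattice from below, a Minkowski/Hermite-type inequality converts this into a lower bound on the Gram determinant $\Reg(E/K)$, Lemma \ref{rank height} controls the rank, and Northcott for the Faltings height gives the finiteness. Everything up to $\Reg(E/K)\geq \gamma_{m_K}^{-m_K}(c_4M)^{m_K}$ is fine (working only with $\lambda_1$ via Hermite's inequality, instead of all successive minima as in the paper, changes nothing here, since every minimum is bounded below by the same quantity $c_4M$).

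The gap is in your absorption step, and it is fatal as written. If you bound the Hermite constant linearly, $\gamma_{m_K}\leq C m_K$, and then substitute the linear rank bound $m_K\leq c_5 M$ of Lemma \ref{rank height}, the height cancels: you get $\Reg(E/K)\geq \left(\frac{c_4 M}{C c_5 M}\right)^{m_K}=\left(\frac{c_4}{C c_5}\right)^{m_K}$, a bound containing no $M$ at all (and whose base is typically $<1$). No subsequent square-rooting or appeal to $M\geq 1$ can reinstate the factor $M$, so neither the inequality $\Reg(E/K)\geq (c_{10}M)^{m_K/2}$ nor the finiteness assertion follows; your finiteness paragraph uses precisely the inequality that has not been proved. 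What actually produces the exponent $m_K/2$ in the paper is that the lattice-geometric input is taken with the constant $m_K^{m_K/2}$, namely Minkowski's successive-minima inequality in the form $\hat{h}(P_1)\cdots\hat{h}(P_{m_K})\leq m_K^{m_K/2}\Reg(E/K)$ for points realizing the minima --- i.e. a square-root-size bound $\gamma_{m_K}\leq \sqrt{m_K}$ (any $\gamma_{m_K}\leq C\sqrt{m_K}$ with $C$ absolute would do equally well), not the linear bound $\gamma_n\ll n$ you invoke. With that form one has $\Reg(E/K)\geq (c_4 M)^{m_K} m_K^{-m_K/2}$, and only then is the rank bound inserted, via $\sqrt{m_K}\leq\sqrt{c_5 M}$, which costs a single factor $M^{m_K/2}$ and leaves $\Reg(E/K)\geq \left(\frac{c_4^2}{c_5}M\right)^{m_K/2}$. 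So the half-power is not forced by a trade-off between a linear Hermite bound and the linear rank bound, as you assert --- with two linear bounds the height disappears entirely --- it comes from paying only $m_K^{m_K/2}$ on the lattice side and $\sqrt{c_5 M}$ per minimum on the rank side.
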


\begin{proof}
For the duration of the proof, let us denote $h=\max\{\hFplus(E/K),\,1\}$.
The inequality is trivial for $m_K=0$ because in that case the regulator is $1$. From now on, let us assume $m_K\neq 0$. Let $L=3(O)$ and let $\hat{h}=\hat{h}_{E,L}$ be the associated canonical height on $E$ and consider the euclidean space $(E(K)\otimes\mathbb{R},\hat{h})\simeq(\mathbb{R}^{m_K}, \hat{h})$. Apply Minkowski's successive minima inequality to the Mordell-Weil lattice $\Lambda_K=E(K)/E(K)_{\mathrm{tors}}$ viewed as a lattice inside this euclidean space, 
\[
 \lambda_1(\Lambda_K)\cdots \lambda_{m_K}(\Lambda_K)\leq m_K^{m_K/2}  \Reg(E/K).
\]

\noindent Now apply $m_K$ times the inequality of Conjecture \ref{LangSilV1} to get

\begin{equation}\label{reg height part}
\Reg(E/K)\geq \frac{ c_{4}^{m_K}h^{m_K}}{m_K^{{m_K/2}}},
\end{equation}

\noindent then applying Lemma \ref{rank height} one obtains 

\begin{equation}\label{rank height sq}
\sqrt{m_K}\leq \sqrt{c_{5}} \sqrt{h}.
\end{equation}

\noindent Hence we have in (\ref{reg height part})
 
$$
 \Reg(E/K)\geq \left(c_{4}c_{5}^{-1/2} \sqrt{h}\right)^{m_K}.
$$

Finally, if the regulator is bounded then the height is bounded as soon as $m_K\neq 0$, hence the claimed finiteness.

\end{proof}

\begin{rem}
If the rank $m_K$ is zero, then the regulator $\Reg(E/K)$ is 1, so the inequality of Theorem \ref{reg height thm} is optimal in this respect. If one takes for instance the family of curves $E_p$ over $\mathbb{Q}$ defined by the affine model $y^2=x^3+p^2$, where $p$ is a prime congruent to $5$ modulo $9$, then $\rank_{\mathbb{Q}}(E_p)$ is zero by \cite{CoPa} page 399 and $\hFplus(E_p/\mathbb{Q})\gg \log p$. The situation is then similar to the CM-fields case as described in the Remark \ref{disc CM}. One cannot expect the height to be upper bounded for rank zero elliptic curves, even if the regulator is upper bounded for trivial reasons. Then when the rank is non-zero we have two cases. If the Mordell-Weil rank $m_K$ is one the inequality is exactly Lang-Silverman's. If $m_K\geq 2$ the inequality of Theorem \ref{reg height thm} is \textit{a priori} weaker than the Lang-Silverman conjecture.
\end{rem}

We thus obtain Theorem \ref{reg height thm intro} as a corollary which may be expressed in other words by: the Lang-Silverman conjecture implies that the regulator $\Reg(E/K)$ verifies a Northcott property on the set of elliptic curves defined over a fixed number field $K$ with $E(K)$ Zariski dense. This would be the elliptic curve counterpart to Theorem \ref{reg fini intro}.

\section{Conclusion}

Starting from the comparison of the residue theorem for the Dedekind zeta function and the strong form of the conjecture of Birch and Swinnerton-Dyer, one gets a way to put number fields invariants and abelian varieties invariants in link.
Studying furthermore the Brauer-Siegel theorem for number fields, Hindry gives in \cite{Hin} a conjectural equivalent in the case of abelian varieties. He then comes with a dictionary linking the invariants. Based on the previous inequalities, let us now add three lines to the (slightly reformulated) dictionary of \cite{Hin}. Note that we only focus on the case of elliptic curves here.

\begin{center}
$\begin{array}{lcccl}
& \mathrm{Number\, field\,} K &  & \mathrm{Elliptic\, curve\,} E/K &\\
\\
\mathrm{zeta\, function} & \zeta_K(s) & \leftrightarrow & L(E,s) & L\, \mathrm{function}\\
\mathrm{log \,of \,discriminant}& \log\vert D_K\vert & \leftrightarrow & \hFplus(E) & \mathrm{Faltings \,height}\\
\mathrm{regulator} & R_K & \leftrightarrow & \Reg(E/K) &\mathrm{regulator}\\
\mathrm{class\,number}& h_K &  \leftrightarrow &\vert \Sha(E/K)\vert & \mathrm{Tate}-\mathrm{Shafarevitch \,group}\\
\mathrm{torsion} & (U_K)_{\mathrm{tors}} & \leftrightarrow & (E\times \hat{E})(K)_{\mathrm{tors}}& \mathrm{torsion \,of\,} E\, \mathrm{and\, dual\,} \hat{E} \\
\\
\\

\mathrm{degree} & d & \leftrightarrow  & g & \mathrm{dimension}\\
 \mathrm{CM \,field} & r_K=r_0 & \leftrightarrow & m_K=0& E(K)\;\mathrm{ non\, Zariski \,dense}\\
  \mathrm{non-CM \,field} & r_K>r_0 & \leftrightarrow & m_K>0& E(K)\;\mathrm{ Zariski \,dense}\\
\\
\end{array}$
\end{center}

\section{A correction}

A square root is missing in the application of the Minkowski inequality on page 59 of \cite{Paz14}. We apologize and correct the statement of Theorem 4.8 page 59 of \cite{Paz14}. We keep the notation of the original article, the regulator is defined using the divisor $L=3(O)$. 

\begin{thm}\label{reg height thm}
Assume the Lang-Silverman Conjecture 4.5 page 58 of \cite{Paz14}. Let $K$ be a number field. There exists a quantity $c_{10}=c_{10}(K)>0$ only depending on $K$ such that for any elliptic curve $E$ defined over $K$ with positive rank $m_K$,
$$ \Reg(E/K)\geq \Big(\frac{c_{10} }{m_K}\max\{\hFplus(E/K),\,1\}\Big)^{m_K}.$$
Thus the set of $\overline{\mathbb{Q}}$-isomorphism classes of elliptic curves defined over a fixed number field $K$ such that $E(K)$ has positive bounded rank $m_K$ and with bounded regulator is finite.
\end{thm}

\begin{proof}
Let $L=3(O)$ and let $\hat{h}=\hat{h}_{E,L}$ be the associated canonical height on $E$ and consider the euclidean space $(E(K)\otimes\mathbb{R},\hat{h}^{1/2})\simeq(\mathbb{R}^{m_K}, \hat{h}^{1/2})$. Apply Minkowski's successive minima inequality to the Mordell-Weil lattice $\Lambda_K=E(K)/E(K)_{\mathrm{tors}}$ viewed as a lattice inside this euclidean space, 
\[
 \lambda_1(\Lambda_K)\cdots \lambda_{m_K}(\Lambda_K)\leq m_K^{m_K/2}  \Reg(E/K)^{1/2}.
\]

\noindent Now apply $m_K$ times the inequality of Conjecture 4.5 page 58 of \cite{Paz14} to get

\begin{equation}\label{reg height part}
\Reg(E/K)\geq \frac{ c_{4}^{m_K}\max\{\hFplus(E/K),\,1\}^{m_K}}{m_K^{{m_K}}},
\end{equation}

Finally, if the regulator and the rank is bounded then the height is bounded as soon as $m_K\neq 0$, hence the claimed finiteness.
\end{proof}

It implies the following new statement of Theorem 1.2 page 48 of \cite{Paz14}. 

\begin{thm}\label{reg height thm intro}
Assume the Lang-Silverman Conjecture 4.5 page 58 of \cite{Paz14}.
The set of $\overline{\mathbb{Q}}$-isomorphism classes of elliptic curves $E$, defined over a fixed number field $K$ with $E(K)$ of positive bounded rank $m_K$ and bounded regulator is finite.
\end{thm}

\section{The number field analogy and a new question}

To recover the statement without the boundedness condition, the first idea would be to improve Lemma 4.7 page 58 of \cite{Paz14} (which is still valid and unconditional) to get an inequality of the form $m_K\leq c(K)\, \hFplus(E/K)^{1-\varepsilon}$ for $\varepsilon>0$ universal and $c(K)>0$ depending only on $K$. For $K=\mathbb{Q}$ and under the Generalized Riemann Hypothesis, Mestre has obtained in II.1.2 pages 217-218 of \cite{Mes86} the inequality $m_\mathbb{Q}\ll \log N_E / \log\log N_E$ where $N_E$ is the conductor of the elliptic curve $E$. It gives hope that a stronger statement than the present Theorem \ref{reg height thm intro} could be true.

In order to remove the boundedness condition on the rank, another idea would be to prove an inequality between the regulator and the rank. In view of the number field case, it would play a role similar to Friedman's Theorem 3.4 page 53 of \cite{Paz14}. So we formulate it here as a question.

\begin{ques}
Let $E$ be an elliptic curve defined over a number field $K$. Let $m_K$ be the rank of the Mordell-Weil group $E(K)$ and let $\Reg(E/K)$ be its regulator. Can one find a positive quantity $c_0(K)$ depending only on $K$ and a strictly increasing function $f:\mathbb{N}\to\mathbb{R^+}$ such that the inequality $$\Reg(E/K)\geq c_0(K)\, f(m_K)$$ holds?
\end{ques}

Such an inequality would indeed imply that if one fixes $K$, a bounded regulator would force the rank to be bounded, exactly as in the case of number fields where a bounded regulator implies a bounded degree. This will be the subject of a future work.


\begin{thebibliography}{widest-label}

\bibitem[Aut13]{Aut} \textsc{Autissier, P.}, 
\textit{Un lemme matriciel effectif}.
Mathematische Zeitschrift {\bf 273} (2013), p. 355-361.

\bibitem[BeMa90]{BeMa} \textsc{Berg\'e, A.-M. and Martinet, J.}, 
\textit{Sur les minorations g\'eom\'etriques des r\'egulateurs}.  S\'eminaire de Th\'eorie des Nombres, Paris 1987--88, Progr. Math., Birkh\"auser Boston, Boston, MA, {\bf 81} (1990), 23--50.

\bibitem[Cha00]{Cha} \textsc{Chai, C.-L.}, 
\textit{N\'eron models for semiabelian varieties: congruence and change of base field}.
Asian J. Math. {\bf 4} (2000), 715--736.

\bibitem[CoPa09]{CoPa} \textsc{Cohen, H. and Pazuki, F.}, 
\textit{Elementary 3-descent with a 3-isogeny}. Acta Arith. {\bf 140.4} (2009), 369--404.

\bibitem[CoSi86]{CorSil} \textsc{Cornell, G. et Silverman, J. H. (editors)}, 
\textit{Arithmetic geometry}. Springer-Verlag
 {} (1986).

\bibitem[Cus84]{Cus1} \textsc{Cusick, T. W.}, 
\textit{Lower bounds for regulators.} Noordwijkerhout 1983 Proceedings, Lect. Notes Math. {\bf 1068} (1984), 63--73.

\bibitem[Cus91]{Cus2} \textsc{Cusick, T. W.}, 
\textit{The regulator spectrum for totally real cubic fields.} Monat. Math.{\bf 112.3} (1991), 217--220.

\bibitem[Fa83]{Falt} \textsc{Faltings, G.}, 
\textit{Endlichkeitss\"atze f\"ur abelsche {V}ariet\"aten \"uber {Z}ahlk\"orpern}.
Invent. Math. {\bf73} (1983), 349--366.

\bibitem[Fri89]{Fri} \textsc{Friedman, E.}, 
\textit{Analytic formulas for the regulator of a number field}. Invent. Math. {\bf 98} (1989), 599--622.

\bibitem[FrSk99]{FriSko} \textsc{Friedman, E. and Skoruppa, N.-P.},
\textit{Relative regulators of number fields}. Invent. Math.
 {\bf 135} (1999), 115--144.

\bibitem[GaR\'e14a]{GauR} \textsc{Gaudron, E. et R\'emond, G.}, \textit{Th\'eor\`eme des p\'eriodes et degr\'es minimaux d'isog\'enies}. 
Comment. Math. Helvet.  {\bf 89.2} (2014), 343--403.
 
 \bibitem[GaR\'e14b]{GaRe2} \textsc{Gaudron, E. and R\'emond, G.},
\textit{Polarisations et isog\'enies}. Duke Math.
 {\bf } (To appear, 2014).

\bibitem[Hin07]{Hin} \textsc{Hindry, M.}, 
\textit{Why is it difficult to compute the Mordell-Weil group?} Diophantine geometry, CRM Series, Ed. Norm., Pisa {\bf 4} (2007), 197--219.

\bibitem[HiSi88]{HiSi3} \textsc{Hindry, M. and Silverman, J.}, \textit{The canonical height and integral points on elliptic curves},
Invent. Math. {\bf 93} (1988), 419--450.

\bibitem[Liu02]{Liu} \textsc{Liu, Q.}, 
\textit{Algebraic Geometry and Arithmetic Curves.} Oxford Graduate Texts in Mathematics, Oxford Science Publications {\bf 6} (2002).

\bibitem[Neu99]{Neu} \textsc{Neukirch, J.}, 
\textit{Algebraic number theory.} Grundlehren der Mathematischen Wissenschaften, Springer-Verlag {\bf 322} (1999).

\bibitem[Odl77]{Odl1} \textsc{Odlyzko, A. M.}, 
\textit{Lower bounds for discriminants of number fields. II.} T\^ohoku Math. J. {\bf 29.2} (1977), 209--216.

\bibitem[Odl90]{Odl2} \textsc{Odlyzko, A. M.}, 
\textit{Bounds for discriminants and related estimates for class numbers, regulators and zeros of zeta functions: a survey of recent results}. S\'em. Th\'eor. Nombres Bordeaux {\bf 2.2.1} (1990), 119--141.

\bibitem[Paz10]{Paz2} \textsc{Pazuki, F.}, 
\textit{Remarques sur une conjecture de {L}ang}. Journal de Théorie des Nombres de Bordeaux {\bf 22} no.1 (2010), 161--179.

\bibitem[Paz12]{Paz3} \textsc{Pazuki, F.}, 
\textit{Theta height and Faltings height}. Bull. Soc. Math. France {\bf 140.1} (2012), 19--49.

\bibitem[Rem52]{Rema} \textsc{Remak, R.}, 
\textit{\"Uber Gr\"ossenbeziehungen zwischen Diskriminante und Regulator eines algebraischen Zahlk\"orpers}. Compositio Math. {\bf 10} (1952), 245--285.

\bibitem[R\'em05]{Remond2} \textsc{R\'emond, G.}, 
\textit{In\'egalit\'e de Vojta g\'en\'eralis\'ee.} Bull. Soc. Math. France {\bf 133.4} (2005), 459--495.

\bibitem[R\'em10]{Remond} \textsc{R\'emond, G.}, 
\textit{Nombre de points rationnels des courbes.} Proc. Lond. Math. Soc. {\bf 101.3} (2010), 759--794.

\bibitem[Sam03]{Sam} \textsc{Samuel, P.}, 
\textit{Th\'eorie alg\'ebrique des nombres}. Hermann, Paris, \'edition revue et corrig\'ee {\bf } (2003).

\bibitem[SGA72]{SGA} \textsc{Grothendieck, A.}, 
\textit{Groupes de monodromie en g\'eom\'etrie alg\'ebrique}. SGA 7.1, Lecture Notes in Mathematics, Springer-Verlag, {\bf 288} (1972).

\bibitem[Se72]{Ser} \textsc{Serre, J.-P.},
\textit{Propri\'et\'es galoisiennes des points d'ordre fini des courbes elliptiques}.
Invent. Math. {\bf15} (1972), 259--331.

\bibitem[SiZa95]{SiZa} \textsc{Silverberg, A. and Zarhin, Yu.},
\textit{Semistable reduction and torsion subgroups of abelian varieties}.
Ann. Inst. Fourier {\bf45} (1995), 403--420.

\bibitem[Si86]{Sil} \textsc{Silverman, J. H.},
\textit{Arithmetic of elliptic curves}.
Springer GTM {\bf106} (1986), second printing of the first edition.

\bibitem[Si84b]{Sil3} \textsc{Silverman, J. H.},
\textit{Lower bounds for height functions}.
Duke Math. J. {\bf51} (1984), 395--403.

\bibitem[Si84a]{Sil4} \textsc{Silverman, J. H.},
\textit{An inequality relating the regulator and the discriminant of a number field}.
Journal of Number Theory {\bf19.3} (1984), 437--442.

\bibitem[Wash97]{Wash} \textsc{Washington, L.}, 
\textit{Introduction to cyclotomic fields}. Springer, GTM {\bf 83} (1997), second edition.

\bibitem[Zim81]{Zim} \textsc{Zimmert, R.}, 
\textit{Ideale kleiner Norm in Idealklassen und eine Regulatorabsch\"atzung}. Invent. Math. {\bf 62.3} (1981), 367--380.



\end{thebibliography}

\begin{thebibliography}{widest-label}

\bibitem[Mes86]{Mes86} \textsc{Mestre, J.-F.}, 
\textit{Formules explicites et minorations de conducteurs de vari\'et\'es alg\'ebriques}. Compositio Math. {\bf 58.2} (1986), 209--232.

\bibitem[Paz14]{Paz14} \textsc{Pazuki, F.}, 
\textit{Heights and regulators of number fields and elliptic curves}. Publ. Math. Besan\c{c}on {\bf 2014/2} (2014), 47--62.

\end{thebibliography}
\end{document}